\documentclass[12pt]{amsart}

\usepackage[T1]{fontenc}
\usepackage{amssymb}
\usepackage{enumerate}
\usepackage{hyperref}
\usepackage{color}
\usepackage{graphicx}
\usepackage{amsmath}
\usepackage{amsfonts}
\usepackage{easyReview}
\usepackage{mathtools}

\input xy
\xyoption{all}

\textwidth=480pt
\textheight=670pt
\oddsidemargin=0pt
\evensidemargin=0pt
\topmargin=-10pt

\newcommand{\e}{\varepsilon}
\newcommand{\N}{\mathbb{N}}
\newcommand{\R}{\mathbb{R}}

\newcommand{\G}{\mathcal{G}}
\renewcommand{\H}{\mathcal{H}}
\renewcommand{\S}{\mathcal{S}}
\newcommand{\A}{\mathcal{A}}
\renewcommand{\phi}{\varphi}
\newcommand{\w}{\omega}
\newcommand{\ub}{\textbf{u}}

\newcommand{\Lip}{\operatorname{Lip}}

\newcommand{\subs}{\subseteq}
\newcommand{\Gsim}{{\overset{\mathcal{G}}{\sim }}}

\newcommand{\map}[3]{#1\colon #2 \to #3} 
\newcommand{\cmp}{\circ} 
\newcommand{\rest}{\restriction}

\newtheorem{theorem}{Theorem}[section]
\newtheorem{proposition}[theorem]{Proposition}
\newtheorem{lemma}[theorem]{Lemma}
\newtheorem{claim}{Claim}
\newtheorem{corollary}[theorem]{Corollary}
\theoremstyle{definition}
\newtheorem{definition}[theorem]{Definition}
\newtheorem{remark}[theorem]{Remark}

\title[On the connectivity of graph Lipscomb's space]{
	On the connectivity of graph Lipscomb's space
}

\author[W. Kubi\'{s}, R. Miculescu, A. Mihail and M. Nowak]{Wies\l aw KUBI\'{S}, Radu MICULESCU, Alexandru MIHAIL and Magdalena NOWAK}

\address{W. Kubi\'{s}:
	Institute of Mathematics,
	Czech Academy of Sciences,
	\v{Z}itn\'{a} 25, 115 67 Prague, Czech Republic
	\slash \ Department of Mathematics, Cardinal Stefan Wyszy\'nski University in Warsaw, W\'oycickiego 1/3, 01-938 Warszawa}
\email{kubis@math.cas.cz}

\address{R. Miculescu:
	Faculty of Mathematics and Computer Science,
	Transilvania University of Bra\c{s}ov,
	Iuliu Maniu Street, nr. 50, 500091, Bra\c{s}ov, Romania}
\email{radu.miculescu@unitbv.ro}

\address{A. Mihail:
	Faculty of Mathematics and Computer Science,
	University of Bucharest,
	Academiei Street 14, 010014 Bucharest, Romania}
\email{mihail\_alex@yahoo.com}

\address{M. Nowak
	(ORCID 0000-0003-1915-0001): 
	Mathematics Department, 
	Jan Kochanowski University in Kielce,
	Uniwersytecka 7, 25-406 Kielce, Poland}
\email{magdalena.nowak805@gmail.com}

\subjclass[2010]{Primary: 28A80, 37C70, 54D05,
	54B15, 54C25; Secondary: 37E25, 05C90}
\keywords{graph Lipscomb's space, possibly
	infinite iterated function system, attractor, connectedness, graph, inverse limit}
\date{\today}

\begin{document}
	
	\begin{abstract}
		A central role in topological dimension theory is played by Lipscomb's space $J_{A}$ since it is a universal space for  metric spaces of weight $|A|\geq \aleph _{0}$. On the one hand, Lipscomb's space is the attractor of a possibly infinite iterated function system, i.e. it is a generalized Hutchinson-Barnsley fractal. As, on the other hand, some classical fractal sets are universal spaces, one can conclude that there exists a strong connection between topological dimension theory and fractal set theory. A generalization of Lipscomb's space, using graphs, has been recently introduced (see R. Miculescu, A. Mihail, Graph Lipscomb's space is a generalized Hutchinson-Barnsley fractal, Aequat. Math., \textbf{96} (2022), 1141-1157). It is denoted by $J_{A}^{\G}$ and it is called graph Lipscomb's space associated with the graph $\G$ on the set $A$. It turns out that it is a topological copy of a generalized Hutchinson-Barnsley fractal. This paper provides a characterization of those graphs $\G$ for which $J_{A}^{\G}$ is connected. In the particular case when $A$ is finite, some supplementary characterizations are presented.
	\end{abstract}
	\maketitle

\section{Introduction}

Given a complete metric space $(X,d)$ and a finite family $(f_{i})_{i\in I}$
of contractions on $X$, a classical result of J. Hutchinson (see \cite{Hu}) states
that there exists a unique nonempty compact subset $A$ of $X$ such that $A=%
\bigcup_{i\in I}f_{i}(A)$. It is called the \emph{attractor} of the
iterated function system $\mathcal{S}=((X,d),(f_{i})_{i\in I})$. Since this
framework (which was honed and popularized by M. Barnsley, see \cite{B})
generates almost all the classical fractals, the attractors of iterated
function systems are also called Hutchinson-Barnsley fractals (briefly, H-B
fractals). One way to extend the class of H-B fractals is to consider
systems comprising an infinite family of contractions (see \cite{F,GJ,MU,M,MM_shift}). This way one obtains a larger class of H-B fractals, called
generalized Hutchinson-Barnsley fractals, which are non-empty closed and
bounded subsets of $X$. Note that on the one hand, there are nonempty
compact subsets of $\mathbb{R}$ which are not H-B fractals. On the other
hand, each non-empty compact subset of $X$ is a generalized H-B fractal.

The study of topological features (containing, but not bounded to, connectedness) of (generalized) H-B fractals is a central issue in fractal
theory. For example, M. Hata (see \cite{Ha}) provided conditions under which such
fractals have certain connectivity properties. As an application we mention
a characterization of a compact operator via the non-connectedness of some
H-B fractals related to the given operator (see \cite{MM_char}). For the particular
case of the connectedness of tiles (which play a chief role not only in
fractal theory, but also in wavelet theory) see the very recent papers \cite{D,WL},
and the references therein.

Lipscomb's space $J_{A}$ was introduced in 1975 by S.L. Lipscomb (see
\cite{L_embed}) as a universal space for metric spaces having weight $|A|\geq \aleph
_{0}$. For details and for the connection between topological dimension
theory and generalized Hutchinson-Barnsley fractals we refer to~\cite{L_fract}. The concept of
graph Lipscomb's space $J_{A}^{\mathcal{G}}$ associated with a graph $\mathcal{G}$ on the set $A$, which is a generalization of the classical
notion of Lipscomb's space $J_{A}$, has been recently introduced in \cite{MM_graph}. It
turned out that its embedded version in $\ell^{2}(A')$, denoted by $\omega _{\mathcal{G}}^{A}$, where $A'=A\setminus \{z\}$, $z$
being a fixed element of $A$, is a generalized H-B fractal. As it is known
that the embedded version of $J_{A}$ in $\ell^{2}(A')$ is connected
(see \cite{DM}), a natural problem arises: find necessary and sufficient
conditions on $\mathcal{G}$ for the connectedness of the embedded version of 
$J_{A}^{\mathcal{G}}$ in $\ell^{2}(A')$. In this paper we prove
that $\omega _{\mathcal{G}}^{A}$ is connected if and only if the graph $\mathcal{G}$ is connected, if and only if $J^\G_A$ is connected. In case when $A$ is finite, we also prove that if $\G$ is connected, then $J^\G_A$  (and consequently $\omega_\G^A$) is locally connected and arcwise connected.

\section{Graph Lipscomb's space}

First we present some basics of graph theory. By set $\mathbb{N}$ of natural numbers we mean $\{1,2,...\}$.

\begin{definition}
	By a \textit{graph} we understand a pair $\G\coloneqq(V,E)$ consisting the set $V\neq\emptyset$ of \textit{vertices} and the set $E\subs \{\{v,u\}: v,u\in V\}$ of \textit{edges} such that $\{v\}\in E$ for every vertex $v\in V$. 
\end{definition}
In graph theory such an object is precisely called \textit{undirected simple graph with loops in every vertex}. For two vertices $v,u\in V$ we say that $v$ is \textit{adjacent to} $u$ when $\{v,u\}\in E$. In our framework, the edges of a given graph $\G$ induce a reflexive and symmetric binary relation on vertices called the \textit{adjacency relation} on $\G$. 

\begin{definition}
	A graph $\G=(V,E)$ is called \textit{connected} if for every $v,u\in V$ there exist a positive natural number $n$ and a set $\{v_1,\dots,v_n\} \subs V$ such that $v=v_1, v_n=u$ and $\{v_k,v_{k+1}\}\in E$ for every $k\in\{1,\dots,n-1\}$. 
\end{definition}

 Let $A$ be a set with at least two elements and $\G=(A,E)$  be a graph on $A$. By $N(A)$ we denote the Baire space $(A^\N,d)$ with the metric 

$$d(x,y)=
\begin{cases}
	0 & \text{ if }x=y,\\
	\frac{1}{\min\{n\in\N\colon x_n\neq y_n\}} & \text{ if }x\neq y
\end{cases}
$$
for words $x=x_1x_2..., y=y_1y_2... \in A^\N$.

\begin{definition}
	The \textit{graph Lipscomb's space} $J_A^\G$ is the quotient space $N(A)/_{\Gsim}$ for the following equivalence relation $\Gsim$ on $N(A)$:
	$$x~\Gsim~ y \quad \text{ if } \quad x=y \text{ or }x = sa\vec b,  y= sb\vec a \text{ such that } \{a,b\}\in E,$$
	where $s$ is a finite word of symbols from $A$ (may by empty) and $\vec a=aaa\dots, \vec b=bbb\dots$ are infinite words with constant $a$ or $b$, respectively.
\end{definition}
As it is shown in \cite{MM_graph}, every equivalence class has at most two elements.

Let us fix $z\in A$ and denote $A'\coloneqq A\setminus\{z\}$. Recall that 
$$\ell^{2}(A')=\Big\{(x_{j})_{j\in A'}\in\R^{A'}\colon x_{j}\neq 0\text{ for only countably many } j\in A' \text{ and }\sum_{j\in A'}x_{j}^{2}<\infty\Big\}$$
is the canonical Hilbert space with the Euclidean metric 
$d_e(x,y)=\sqrt{\sum_{b\in A'}(x_b-y_b)^2}$
for every $x=(x_b)_{b\in A'}$ and $y=(y_b)_{b\in A'}$.

\begin{definition}
Define the following objects:
\begin{itemize}
	\item the function $\pi\colon N(A)\to N(A)/_\Gsim=J^A_\G$ is the quotient, canonical, projection map
	\item the map $p_\G\colon N(A)\to \ell^2(A')$ is defined by $p_\G(x)=(\alpha_b(x))_{b\in A'}$ for every $x=x_1x_2... \in N(A)$, where
	$$\alpha_b(x)\coloneqq \sum_{i=1}^{\infty}	\prod_{j=1}^{i}c_{x_jx_ib}
	\quad\text{ and }\quad
	c_{aa'b}\coloneqq
	\begin{cases}
		0 & \text{ if } a'\neq b\\
		\frac{1}2 & \text{ if } a'=b \wedge \{a,a'\}\in E \\
		\frac{1}3 & \text{ if } a'=b \wedge \{a,a'\}\notin E 
	\end{cases}\quad
	\text{ for any } a,a',b\in A.$$
	\item the space $\w^A_\G\coloneqq p_\G(N(A))$ is a subset of $\ell^2(A')$
	\item the function $s_\G\colon J_A^\G\to \w^A_\G$ is defined by $s_\G([x])=p_\G(x)$, where $[x]$ designates the equivalence class containing $x\in N(A)$.  It is is well defined bijection cause $p_\G(x)=p_\G(y)$ iff $[x]=[y]$ (see \cite[Proposition 3.6]{MM_graph}). 
\end{itemize}
\end{definition}

Thus we have that $p_\G = s_\G\circ \pi$ and the following diagram
$$\xymatrix{
	N(A) \ar[r]^-{\pi} \ar[dr]^-{p_\G} & J_A^\G \ar[d]^-{s_\G}  \\
	& \w^A_\G. }$$
Moreover, for every $x\in N(A)$ the values $s_\G([x])$ and $p_\G(x)$ can be written as follows 
$$s_\G([x])=p_\G(x)=\sum_{i=1}^{\infty}	\Big(\prod_{j=1}^{i}c_{x_jx_ix_i}\Big)\ub_{x_i},$$
where $\ub_{z}$ is the zero element and $(\ub_a)_{a\in A'}$ is the canonical basis of the vector space $\ell^2(A')$ i.e. $\ub_a=(\delta _{ab})_{b\in A'}\in\ell^{2}(A')$ is described by  $\delta _{ab}=
\begin{cases}
	1& \text{ if } a=b;\\ 
	0& \text{ if }a\neq b.
\end{cases}
$
\newline

The concept of graph Lipscomb's space introduced by R. Miculescu and A. Mihail is the generalization of the theory considered in the book \cite{L_fract}. The classical Lipscomb's space $J_A$ is a graph Lipscomb's space for so called complete graphs (where every two vertex are adjacent). We extend results obtained by Lipscomb to show that the map $s_\G$ is a homeomorphism so the space $\w_\G^A$ is homeomorphic to the graph Lipscomb's space $J_A^\G$. First we note that

\begin{remark}\label{rem_c}
	For a given $x\in N(A)$ and $i\in\N$ we have
	$$\sum_{b\in A'}\prod_{j=1}^{i}c_{x_{j}x_{i}b} = \prod_{j=1}^{i}c_{x_{j}x_{i}x_{i}} \in\{0\}\cup \Big[ \frac{1}{3^{i-1}\cdot2}, \frac{1}{2^i}\Big].$$
\end{remark}

\begin{lemma}\label{lem_p_cont}
	The map $p_\G\colon N(A)\to \ell^2(A')$ is continuous.
\end{lemma}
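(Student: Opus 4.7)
The plan is to prove continuity (in fact, uniform continuity) by a standard truncation argument, exploiting the fact that $p_\G(x)$ is defined by an absolutely convergent series whose $i$-th term depends only on $x_1,\dots,x_i$, together with the geometric tail bound built into Remark~\ref{rem_c}.

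First I would introduce, for each $n\in\N$, the partial-sum map $T_n\colon N(A)\to\ell^2(A')$ given by
$$T_n(x)\coloneqq \sum_{i=1}^{n}\Bigl(\prod_{j=1}^{i}c_{x_jx_ix_i}\Bigr)\ub_{x_i}.$$
The key observation is that $T_n(x)$ depends only on the first $n$ coordinates of $x$, so if $d(x,y)<\tfrac{1}{n}$, meaning $x_j=y_j$ for every $j\in\{1,\dots,n\}$, then $T_n(x)=T_n(y)$.

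Next I would estimate the tail $p_\G(x)-T_n(x)=\sum_{i=n+1}^{\infty}\bigl(\prod_{j=1}^{i}c_{x_jx_ix_i}\bigr)\ub_{x_i}$. By Remark~\ref{rem_c}, each coefficient $\prod_{j=1}^{i}c_{x_jx_ix_i}$ lies in $[0,\tfrac{1}{2^i}]$, so since $\|\ub_{x_i}\|_2\le 1$, the triangle inequality in $\ell^2(A')$ yields
$$\|p_\G(x)-T_n(x)\|_2\;\le\;\sum_{i=n+1}^{\infty}\prod_{j=1}^{i}c_{x_jx_ix_i}\;\le\;\sum_{i=n+1}^{\infty}\frac{1}{2^i}\;=\;\frac{1}{2^n}.$$
Note that this bound is uniform in $x$, which already shows that $T_n\to p_\G$ uniformly, hence $p_\G$ is continuous provided each $T_n$ is continuous (which it plainly is, as $T_n$ factors through the continuous projection $N(A)\to A^n$ onto the first $n$ coordinates).

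Finally, I would combine these to get explicit uniform continuity: if $d(x,y)<\tfrac{1}{n}$, then
$$\|p_\G(x)-p_\G(y)\|_2 \le \|p_\G(x)-T_n(x)\|_2 + \|T_n(x)-T_n(y)\|_2 + \|T_n(y)-p_\G(y)\|_2 \le \frac{1}{2^n}+0+\frac{1}{2^n}=\frac{1}{2^{n-1}}.$$
Given $\e>0$, choosing $n$ with $2^{1-n}<\e$ and $\delta=\tfrac{1}{n}$ yields the continuity claim. I do not expect any real obstacle: the only slight subtlety is realizing that the third index in $c_{x_jx_ib}$ forces the sum over $b$ in Remark~\ref{rem_c} to collapse to the diagonal $b=x_i$, which is precisely what makes the geometric $\tfrac{1}{2^i}$ bound available term-by-term.
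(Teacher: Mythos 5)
Your proof is correct and follows essentially the same route as the paper's: both rest on the observation that the terms with $i\le n$ cancel once $x$ and $y$ share their first $n$ letters, while Remark~\ref{rem_c} bounds the remaining tail by $\sum_{i>n}2^{-i}$. The only difference is cosmetic: you apply the triangle inequality directly to the vector-valued series in $\ell^2(A')$, obtaining the bound $2^{1-n}$ on the distance itself, whereas the paper estimates $d_e^2$ coordinatewise via $\sum_{b\in A'}(\alpha_b(x^n)-\alpha_b(x))^2\le\sum_{b\in A'}|\alpha_b(x^n)-\alpha_b(x)|$ and an interchange of summations.
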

\begin{proof} Take the sequence $(x^n)_{n\in\N}$ of words $x^n= x^n_1 x^n_2...\in N(A)$ convergent to $x= x_1x_2...\in N(A)$. Then for every natural $k\in\N$ there exists $n_k\in\N$ such that for $n\geq n_k$ we have $x^n_i=x_i$ for every $i=1,\dots,k$ and
\begin{align*}
		d_e(p_\G(x^n), p_\G(x))^2 & =\sum_{b\in A'}(\alpha_b(x^n)-\alpha_b(x))^2\leq \sum_{b\in A'}| \alpha_b(x^n)-\alpha_b(x) |=\\
		&=\sum_{b\in A'}\Big|\sum_{i=k+1}^\infty(\prod_{j=1}^{i}c_{x^n_j x^n_i b}-\prod_{j=1}^{i}c_{x_{j}x_{i}b})\Big|\leq\\
		&\leq \sum_{b\in A'}\sum_{i=k+1}^\infty\Big|\prod_{j=1}^{i}c_{x^n_j x^n_i b}-\prod_{j=1}^{i}c_{x_{j}x_{i}b}\Big|=\\ 
		&=\sum_{i=k+1}^\infty\sum_{b\in A'}\Big|\prod_{j=1}^{i}c_{x^n_j x^n_i b}-\prod_{j=1}^{i}c_{x_{j}x_{i}b}\Big|=\\
		&=\sum_{i=k+1}^\infty\Big(|\prod_{j=1}^{i}c_{x^n_j x^n_i x^n_i}-\prod_{j=1}^{i}c_{x_{j}x_{i}x^n_{i}}|+|\prod_{j=1}^{i}c_{x^n_j x^n_i x_{i}}-\prod_{j=1}^{i}c_{x_{j}x_{i}x_{i}}|\Big)\leq\\
		&\leq \sum_{i=k+1}^\infty \Big(\frac1{2^i}+\frac1{2^i}\Big)=\frac1{2^{k-1}}.
	\end{align*}
	This means that $d_e(p_\G(x^n), p_\G(x))\to 0$ which completes the proof. 
\end{proof}

\begin{proposition}\label{PropEng}\cite[Prop.2.4.3.]{Eng}
	For the map $p$ of the topological space $X$ onto topological space $Y$ and for the equivallence relation $\sim$ on $X$ determined by the decomposition $(p^{-1}(y))_{y\in Y}$, the following conditions are equivalent:
	\begin{enumerate}[(i)]
		\item $p$ is a composition of the quotient mapping $\pi\colon X\to X/_\sim$ and the homeomorphism $s\colon X/_\sim\to Y$
		\item the set $U$ is open in $Y$ iff $p^{-1}(U)$ is open in $X$
		\item the set $U$ is closed in $Y$ iff $p^{-1}(U)$ is closed in $X$
		\item the map $s\colon X/_\sim\to Y$ defined by letting $s(p^{-1}(y))=y$, is a homeomorphism.
	\end{enumerate}
\end{proposition}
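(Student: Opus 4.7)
The plan is to establish the cyclic chain of implications (i) $\Rightarrow$ (ii) $\Rightarrow$ (iv) $\Rightarrow$ (i), together with the trivial equivalence (ii) $\Leftrightarrow$ (iii), which follows at once from $p^{-1}(Y\setminus U)=X\setminus p^{-1}(U)$ and the corresponding complement identities in $X$ and $Y$. For (i) $\Rightarrow$ (ii), assuming $p=s\cmp\pi$ with $\pi$ the canonical quotient map and $s$ a homeomorphism, one has $p^{-1}(U)=\pi^{-1}(s^{-1}(U))$; since $s$ is a homeomorphism, $s^{-1}(U)$ is open in $X/_\sim$ iff $U$ is open in $Y$, and since $\pi$ induces the quotient topology, $\pi^{-1}(V)$ is open in $X$ iff $V$ is open in $X/_\sim$. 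Chaining these two biconditionals yields (ii).

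For (ii) $\Rightarrow$ (iv), I would define $s\colon X/_\sim\to Y$ by $s(p^{-1}(y))=y$. It is well-defined and bijective because $\sim$ is, by hypothesis, precisely the fiber partition of $p$. To verify continuity of $s$, take $U\subs Y$ open; by (ii), $p^{-1}(U)$ is open in $X$, and the set-theoretic identity $p^{-1}(U)=\pi^{-1}(s^{-1}(U))$ forces $s^{-1}(U)$ to be open in $X/_\sim$ by the defining property of the quotient topology. For continuity of $s^{-1}$ (equivalently, openness of $s$), take $V\subs X/_\sim$ open; then $\pi^{-1}(V)$ is open in $X$, and since $s$ is a bijection one has $\pi^{-1}(V)=p^{-1}(s(V))$, so (ii) in its other direction yields that $s(V)$ is open in $Y$. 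The implication (iv) $\Rightarrow$ (i) is immediate, since $p=s\cmp\pi$ holds by the very definition of $s$.

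The argument has no genuine obstacle; it is a routine manipulation of the quotient topology. The only point requiring a touch of care is that both directions of the biconditional in (ii) are actually used — one for the continuity of $s$, the other for its openness — so the strength of (ii) as a biconditional (rather than a one-sided implication, which would only say $\pi$ is a continuous surjection) is essential. Since the statement coincides with Proposition 2.4.3 of Engelking's textbook, the authors reasonably cite it rather than reproduce these standard verifications.
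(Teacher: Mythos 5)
Your proof is correct: the cycle (i)\,$\Rightarrow$\,(ii)\,$\Rightarrow$\,(iv)\,$\Rightarrow$\,(i) together with the complementation argument for (ii)\,$\Leftrightarrow$\,(iii) is exactly the standard verification, and you rightly flag that both directions of the biconditional in (ii) are needed (one for continuity of $s$, one for its openness). The paper itself gives no proof here --- it simply cites Engelking, Proposition 2.4.3 --- so your argument fills in precisely the routine details that the cited source contains.
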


\begin{theorem}\label{s_hom}
	The map $s_\G\colon J_A^\G\to \w^A_\G$ is a homeomorphism.
\end{theorem}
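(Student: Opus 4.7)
By the factorization $p_\G = s_\G\cmp \pi$ and the fact (cited from \cite{MM_graph}) that $s_\G$ is a well-defined bijection identifying $\Gsim$-classes with fibers of $p_\G$, Proposition \ref{PropEng} reduces the theorem to verifying that $p_\G$ is a quotient map onto $\w_\G^A$. The forward implication of condition (ii) --- if $U$ is open in $\w_\G^A$ then $p_\G^{-1}(U)$ is open in $N(A)$ --- is continuity of $p_\G$, already supplied by Lemma \ref{lem_p_cont}. Hence the substantive task is the converse: show that if $F\subs\w_\G^A$ satisfies that $p_\G^{-1}(F)$ is closed in $N(A)$, then $F$ is closed in $\w_\G^A$.

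Since $\w_\G^A\subs \ell^2(A')$ is metric, I would phrase this sequentially. Given $y^n\to y$ in $\w_\G^A$ with $y^n=p_\G(x^n)$ and $y=p_\G(x)$, the plan is to produce representatives $\tilde x^n\Gsim x^n$ and $\tilde x\Gsim x$ with $\tilde x^n\to \tilde x$ in the Baire metric on $N(A)$; then $\pi(x^n)=\pi(\tilde x^n)\to \pi(\tilde x)=\pi(x)$ in the quotient topology on $J_A^\G$, giving continuity of $s_\G^{-1}$ and hence closedness of $F$. The representatives $\tilde x^n$ can be built coordinate by coordinate using Remark \ref{rem_c}: the $\ub_{x_1}$-coefficient of $p_\G(x)$ is at least $c_{x_1x_1x_1}=\tfrac{1}{2}$, while for $b\neq x_1$ the $\ub_b$-coefficient of $p_\G(x)$ is at most $\sum_{i\geq 2}\tfrac{1}{2^i}=\tfrac{1}{2}$ by the upper bound in Remark \ref{rem_c}. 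So $\ub_{x_1}$ is the (generically unique) largest direction of $y$, and convergence $y^n\to y$ in $\ell^2(A')$ forces $x^n_1=x_1$ eventually. After fixing the first letter, one applies the same argument to the shifted words $x^n_2x^n_3\dots$, exploiting the self-similar recursion $p_\G(ax_2x_3\dots) = c_{a x_2 x_2}\ub_{x_2}+\dots$ that allows the bound to be rescaled, and iterates to obtain agreement on every prefix.

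The main obstacle is the borderline case where the coefficient inequality $\tfrac{1}{2}\ge\tfrac{1}{2}$ is actually an equality, so that the largest direction of $y$ is not unique. The analysis of equality in the bounds of Remark \ref{rem_c} shows that this happens precisely when $x$ has the form $sa\vec b$ with $\{a,b\}\in E$, which is exactly the configuration that $\Gsim$ was designed to collapse and exactly when $[x]_{\Gsim}$ has two representatives $sa\vec b$ and $sb\vec a$. In this case the construction chooses the representative $\tilde x^n\Gsim x^n$ whose distinguished letter matches the choice made for $\tilde x$. Verifying that this selection resolves every ambiguity at every stage of the coordinatewise argument, and that the resulting $\tilde x^n$ converges in $N(A)$, is the technical heart of the proof; once established, condition (ii) of Proposition \ref{PropEng} holds, and the equivalence (i)$\Leftrightarrow$(iv) of that proposition yields that $s_\G$ is a homeomorphism.
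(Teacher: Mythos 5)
Your reduction via Proposition \ref{PropEng} and Lemma \ref{lem_p_cont} to the statement ``$p_\G^{-1}(F)$ closed implies $F$ closed'' is exactly the paper's starting point, but the mechanism you propose for lifting a convergent sequence $y^n\to y$ to $N(A)$ has a genuine gap. You assert that one can choose representatives $\tilde x^n\Gsim x^n$ and $\tilde x\Gsim x$ with $\tilde x^n\to\tilde x$ in the Baire metric for the \emph{whole} sequence. This is false. Take $A=\{z,a,b\}$ with $\{a,b\}\in E$ and interleave $x^{2n}=ab^n\vec a$ with $x^{2n+1}=ba^n\vec b$: both sequences have image converging to $p_\G(a\vec b)=\tfrac12\ub_a+\tfrac12\ub_b$, yet both representatives of $ab^n\vec a$ begin with $a$ and both representatives of $ba^n\vec b$ begin with $b$, so no choice of representatives produces a sequence converging in $N(A)$ (words with distinct first letters are at Baire distance $1$). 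The ``selection of the distinguished letter'' that you defer to as the technical heart is precisely where this breaks: the ambiguity cannot always be resolved coherently along the full sequence, only along a subsequence. A single convergent subsequence of preimages landing in the closed set $p_\G^{-1}(F)$ does suffice for closedness (and the subsequence-of-a-subsequence criterion rescues the continuity of $s_\G^{-1}$ as well), but your proposal neither makes this reduction nor carries out the recursive coordinate argument; as written, the hard part is announced rather than proved. Two smaller omissions in the first-letter step: the case $x_1=z$ (there is no coordinate $\ub_z$ to be ``largest''), and the case where the letters $x^n_1$ run through infinitely many distinct elements of $A$, which is excluded by the square-summability of $(\alpha_b(y))_{b\in A'}$ rather than by your $\tfrac12$-versus-$\tfrac12$ comparison.

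The paper avoids this bookkeeping entirely. Its Claim \ref{cl_0} extracts a convergent subsequence of preimages by a compactness argument: if no sequence $(x^n)$ with $x^n\in p_\G^{-1}(y_n)$ had a convergent subsequence, then by a diagonal argument some coordinate set $\{x^n_k\colon n\in\N\}$ would be infinite, and the lower bound $\frac{1}{3^{k-1}\cdot 2}$ of Remark \ref{rem_c} then produces letters $b'$ for which $\alpha_{b'}(p_\G(x^n))$ stays bounded away from $\alpha_{b'}(y)$, contradicting $y_n\to y$. The limit $x$ of the extracted subsequence lies in the closed set $p_\G^{-1}(U)$, and continuity of $p_\G$ gives $p_\G(x)=y$, hence $y\in U$. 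No representative selection, no recursion on shifted words, and no equality-case analysis of Remark \ref{rem_c} is needed. If you wish to pursue your route, restate the goal as: every subsequence of $(x^n)$ admits a further subsequence of $\Gsim$-representatives converging in $N(A)$ to some preimage of $y$; that weaker claim is true and is all the argument requires.
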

\begin{proof} 
	According to the Proposition \ref{PropEng} and Lemma \ref{lem_p_cont} we shall show that the set $U\subs \omega^A_\G$ is closed whenever $p_\G^{-1}(U)$ is closed in $N(A)$. Take an arbitrary convergent sequence $(y_n)_{n\in\N}$ in $U$ convergent to some limit $y\in\w^A_\G$. Consider the sequence of sets $Y_n\coloneqq p_\G^{-1}(y_n)\subs p_\G^{-1}(U)$. Then we show that
	\begin{claim}\label{cl_0}
		There exist $x\in N(A)$, strictly increasing sequence $(n_k)_{k\in\N}$ and sequence $(x^{n_k})_{k\in\N}$ of words such that $x^{n_k}\in Y_{n_k}$ and $x= \lim_{k\to\infty}x^{n_k}$. 
	\end{claim}
	\begin{proof}
		Assume, in the contrary, that there is no convergent subsequence in any sequence $(x^n)_{n\in\N}$ of words such that $x^n\in Y_n$ for $n\in\N$. Then for every such sequence there exists $k\geq 1$ such that the set $K\coloneqq\{x^n_k\colon n\in\N\}$ is infinite. [Otherwise we could inductively define a sequence $x\in N(A)$ such that for any $k\geq 1$ the distance $d(x,x^n)<\frac1{k}$ for infinitely many $n$, thus $x$ is a limit of some subsequence of $(x^n)_{n\in\N}$.] \\Moreover we can find the numbers $\e>0$ and natural number $l\geq k$ such that 
		$$\frac1{2^l} +\e < \frac1{3^{k-1}\cdot 2}.$$
		Then for every $x=x_1 x_2...\in p_\G^{-1}(y)$ also the set $K_x:=K\setminus \{x_1, \dots, x_l\}$
		is infinite. Thus, for infinitely many $n\in \N$ the symbol $x^n_k\in K_x$ and for such $b'\coloneqq x^n_k$ we have
		\begin{align*}
			d_e(y_n, y) & =d_e(p_\G(x^n), p_\G(x))  =\sqrt{ \sum_{b\in A'}(\alpha_{b}(x^n)-\alpha_{b}(x))^2} \geq | \alpha_{b'}(x^n)-\alpha_{b'}(x) |=\\
			&=\Big|\sum_{i=1}^\infty(\prod_{j=1}^{i}c_{x^n_j x^n_i b'}-\prod_{j=1}^{i}c_{x_{j}x_{i}b'})\Big|=\\
			&=\Big|\sum_{i=1}^l \prod_{j=1}^{i}c_{x^n_j x^n_i b'} + \sum_{i=l+1}^\infty(\prod_{j=1}^{i}c_{x^n_j x^n_i b'}-\prod_{j=1}^{i}c_{x_{j}x_{i}b'})\Big|\geq\\
			&\geq \frac1{3^{k-1}\cdot 2} -  \sum_{i=l+1}^\infty\frac1{2^i}=\frac1{3^{k-1}\cdot 2} -\frac1{2^{l}}>\e.
		\end{align*}
		This is a contradiction with the fact that $y_n\to y$. 
	\end{proof}   
	Due to the Claim \ref{cl_0}, there exists $x\in N(A)$, strictly increasing sequence $(n_k)_{k\in\N}$ and sequence $(x^{n_k})_{k\in\N}$ of words such that $x^{n_k}\in Y_{n_k}$ and $x= \lim_{k\to\infty}x^{n_k}$. All elements $x^{n_k}$ belong to the closed set $p_\G^{-1}(U)$ so also $x\in p_\G^{-1}(U)$. By the continuity of the map $p_\G$, proved in Lemma \ref{lem_p_cont}, we have
	$$p_\G(x)=\lim_{k\to\infty}p_\G(x^{n_k})=\lim_{k\to\infty} y_{n_k} = y.$$
	Thus $y\in U$, cause $p_\G(p_\G^{-1}(U))=U$. Thus we proved that $U$ is closed, hence $p_\G$ satisfies (iii) from Proposition \ref{PropEng} so the map $s_\G$ is a homeomorphism.
\end{proof}

\section{Connectedness of $J^\G_A$ and $\w^A_\G$}

The main aim of this paper is to characterize connected spaces $J^\G_A$ and $\w^A_\G$. To do this, we will use the fact that the space $\w^A_\G$ is a generalized Hutchinson-Barnsley fractal or, in other words, an attractor of the possibly infinite iterated function system. Therefore we present some basics of the Hutchinson-Barnsley fractal theory.

Let $(X,d)$ be the arbitrary metric space. For a function $f\colon X\to X$ and $n\in \N$, the composition of $f$ by itself $n$ times will be denoted by $f^{[n]}$ and $\Lip( f)$ stands for the Lipschitz constant of $f$. A map $f$ satisfying $\Lip(f)<1$ is called a \textit{contraction}. Moreover

\begin{itemize}
	\item $P_{b}(X) \coloneqq	\{B\subseteq X\mid B\text{ is bounded and nonempty}\}$
	\item $P_{cl}(X) \coloneqq \{B\subseteq X\mid B\text{ is closed and nonempty}\}$
	\item $P_{b,cl}(X)\coloneqq P_{b}(X)\cap P_{cl}(X)$
	\item $P_{cp}(X)\coloneqq \{B\subseteq X\mid B\text{ is compact}\}.$
\end{itemize}

The space $P_{b,cl}(X)$ is a metric space with the standard Hausdorff-Pompeiu
metric $h$, described by 
\begin{equation*}
h(A,B)=\max \{\underset{a\in A}{\sup }\text{ }d(a,B),\underset{b\in B}{\sup }%
\text{ }d(b,A)\},
\end{equation*}
where $d(x,Y)=\inf\{d(x,y)\colon y\in Y\}$ for a set $Y\in P_{b,cl}(X)$ and an element $x\in X$.
The topology generated by the metric $h$ is the same as the standard Vietoris topology on $P_{b,cl}(X)$. 

\begin{definition}
	An\textit{ iterated function system }(IFS for
	short), is a pair $\S\coloneqq ((X,d),(f_{i})_{i\in I})$ consists of:
	
\begin{enumerate}[i)]
	\item a complete metric space $(X,d)$;
	\item a finite family of contractions $f_{i}:X\to X$  for ${i\in I}$.
\end{enumerate}
	The function $F_{\mathcal{S}}:P_{cp}(X)\rightarrow P_{cp}(X)$ defined by 
	$$F_{\S}(K)=\bigcup_{i\in I}f_{i}(K),$$
	for all $K\in P_{cp}(X)$, which is called the \textit{set function associated to} $\mathcal{S}$, turns out to be a contraction (with respect to the Hausdorff-Pompeiu metric) and its unique fixed point, denoted by $A_{\mathcal{S}}$, is called the \textit{attractor} of $\mathcal{S}$.
	Moreover, for every $K\in P_{cp}(X)$
	$$\lim_{n\to\infty}h(F_\S^{[n]}(K),A_\S)=0.$$
\end{definition}

The sets obtained as attractors of IFSs are called Hutchinson-Barnsley
fractals.

\begin{definition}
	(see \cite{MM_shift}) A \textit{possibly infinite iterated function system} (IIFS for short) is a pair $((X,d),(f_{i})_{i\in I})$, denoted by $\mathcal{S}$, which
		consists of:
		\begin{enumerate}[i)]
			\item a complete metric space $(X,d)$;
			\item a family of contractions $f_{i}\colon X\to X$ for $i\in I$, with the following two properties:
			\begin{enumerate}[a)]
				\item $\sup_{i\in I}\Lip(f_i)<1$
				\item the family $(f_{i})_{i\in I}$ is bounded (i.e. $\bigcup_{i\in I} f_{i}(A)\in P_{b}(X)$ for every $A\in P_{b}(X)$, which is equivalent to the fact that there exists $x_{0}\in X$ such that $(f_{i}(x_{0}))_{i\in I}$ is bounded).
			\end{enumerate}
		\end{enumerate}
		The function $F_{\mathcal{S}}\colon P_{b,cl}(X)\to P_{b,cl}(X)$, given by
		$$F_{\mathcal{S}}(B)=\overline{\bigcup_{i\in I} f_{i}(B)},$$
		for all $B\in P_{b,cl}(X)$, which is called the \textit{fractal operator associated to} $\mathcal{S}$, is a contraction and therefore, it has a unique fixed point, denoted by $A_{\mathcal{S}}$, which is called the \textit{attractor of} $\mathcal{S}$.
		
		Moreover,
		$$\lim_{n\to\infty} h(F_{\mathcal{S}}^{[n]}(B),A_{\mathcal{S}})=0,$$
		for every $B\in P_{b,cl}(X)$.
\end{definition}

The sets obtained as attractors of IIFSs are called \textit{generalized Hutchinson-Barnsley fractals}.

\begin{proposition}\label{prop_limcp}
	 \cite[Proposition 2.7]{MM_gen} Let $(X,d)$ be a complete metric space, $B\in P_{b,cl}(X)$ and $(B_{n})_{n\in \mathbb{N}}\subseteq P_{cp}(X)$ such that $lim_{n\to\infty}h(B_{n},B)=0$. Then $B\in P_{cp}(X)$.
\end{proposition}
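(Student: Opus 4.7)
The plan is to reduce compactness of $B$ to completeness plus total boundedness. Since $B \in P_{b,cl}(X)$ is a closed subset of the complete metric space $X$, it is itself complete as a metric subspace. Hence it suffices to show that $B$ is totally bounded, i.e. for every $\e > 0$ we can find a finite $\e$-net in $B$.

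Fix $\e > 0$. Since $h(B_n, B) \to 0$, I would pick $n$ with $h(B_n, B) < \e/3$, so that in particular every point of $B$ is within distance $\e/3$ of some point of $B_n$, and symmetrically. Because $B_n \in P_{cp}(X)$ is compact, it is totally bounded, so there is a finite set $\{x_1, \dots, x_k\} \subs B_n$ which is an $\e/3$-net in $B_n$. For each $i$, the inequality $d(x_i, B) < \e/3$ lets me choose $y_i \in B$ with $d(x_i, y_i) < \e/3$. I claim $\{y_1,\dots,y_k\}$ is an $\e$-net in $B$: given $y \in B$, first find $x \in B_n$ with $d(y,x) < \e/3$, then $x_i$ with $d(x, x_i) < \e/3$, and the triangle inequality gives $d(y, y_i) < \e$.

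Combining total boundedness with the completeness of $B$ noted above yields that $B$ is compact, i.e. $B \in P_{cp}(X)$. There is no serious obstacle here; the only delicate point is the three-fold triangle inequality, which is why I would set up the approximations with $\e/3$ rather than $\e/2$ from the start. The use of nonemptiness of $B$ (guaranteed by $B \in P_{b,cl}(X)$) is implicit in being able to pick the witnesses $y_i \in B$.
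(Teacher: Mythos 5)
Your argument is correct. Note that the paper does not prove this proposition at all --- it is quoted from \cite[Proposition 2.7]{MM_gen} --- so there is nothing to compare against line by line; your reduction to ``complete plus totally bounded'' with the $\e/3$-net transferred from a compact $B_n$ is the standard proof of exactly this fact and is sound as written (the only implicit point, that $B_n\neq\emptyset$ for large $n$ so the net is nonempty, follows from $h(B_n,B)<\infty$ and $B\neq\emptyset$).
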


\begin{remark}
	The concept of IIFS is a generalization of the one of IFS. Indeed, let $((X,d),(f_{i})_{i\in I})$ be an IIFS for which $I$ is finite and let $A_{\mathcal{S}}$ be its fixed point. Then $F_{\mathcal{S}}(K)=\overline{\bigcup_{i\in I} f_{i}(K)}=\bigcup_{i\in I} f_{i}(K)\in P_{cp}(X)$ and, inductively, we have $F_{\mathcal{S}}^{[n]}(K)\in P_{cp}(X)$ for every $K\in P_{cp}(X)$ and $n\in \mathbb{N}$.
	As $\lim_{n\to\infty} h(F_{\mathcal{S}}^{[n]}(K),A_{\mathcal{S}})=0$, Proposition \ref{prop_limcp} yields $A_{\mathcal{S}}\in P_{cp}(X)$, i.e. $A_{\mathcal{S}}$ is the attractor of the IFS $((X,d),(f_{i})_{i\in I})$.
\end{remark}

\begin{definition}
	Let $(X,d)$ be a metric space and $(A_{i})_{i\in I}$ a family of nonempty subsets of $X$.
	The family $(A_{i})_{i\in I}$ is said to be \textit{connected} if for every $i,j\in I$ there exist $n\in \mathbb{N}$ and $\{i_{1},...,i_{n}\}\subseteq I$ such that $i_{1}=i$, $
	i_{n}=j$ and $A_{i_{k}}\cap A_{i_{k+1}}\neq \emptyset $
	for every $k\in \{1,...,n-1\}$.
\end{definition}

\begin{theorem}\label{thm_atcon}
	\cite[Theorem 1.6.2, page 33]{K}
	Given an IFS $\mathcal{S}=((X,d),(f_{i})_{i\in I})$\, the
	following statements are equivalent:
	\begin{enumerate}[1)] 
		\item the family $(f_{i}(A_{\mathcal{S}}))_{i\in I}$ is connected;
		\item the attractor $A_{\mathcal{S}}$ is connected.
	\end{enumerate}
\end{theorem}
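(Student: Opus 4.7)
My plan is to prove the two implications separately; $(2) \Rightarrow (1)$ reduces to a separation argument, whereas $(1) \Rightarrow (2)$ is the substantive direction and I would attack it by passing to the $n$-th iterate of the IFS and invoking the $\e$-chain characterization of connectedness in a compact metric space.

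For $(2) \Rightarrow (1)$ I would argue contrapositively. If the family $(f_i(A_\S))_{i \in I}$ is disconnected, the equivalence relation on $I$ generated by $i \sim j \iff f_i(A_\S) \cap f_j(A_\S) \neq \emptyset$ splits $I$ into at least two classes; fixing one class $I_1 \subsetneq I$, the sets $U \coloneqq \bigcup_{i \in I_1} f_i(A_\S)$ and $V \coloneqq \bigcup_{i \in I \setminus I_1} f_i(A_\S)$ are both nonempty, closed (as finite unions of compact sets), disjoint by the choice of $I_1$, and satisfy $U \cup V = F_\S(A_\S) = A_\S$. This separates $A_\S$, contradicting its connectedness.

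For $(1) \Rightarrow (2)$ I would invoke the classical fact that a compact metric space is connected if and only if, for every $\e > 0$, any two of its points can be joined by a finite sequence whose consecutive distances are less than $\e$. Set $c \coloneqq \max_{i \in I} \Lip(f_i) < 1$, $D \coloneqq \diam(A_\S) < \infty$, and for $\mathbf{i} = (i_1, \ldots, i_n) \in I^n$ write $f_\mathbf{i} \coloneqq f_{i_1} \cmp \cdots \cmp f_{i_n}$, so that $\diam(f_\mathbf{i}(A_\S)) \leq c^n D$ while $A_\S = \bigcup_{\mathbf{i} \in I^n} f_\mathbf{i}(A_\S)$. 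Given $\e > 0$, choose $n$ with $c^n D < \e$; granting that the family $(f_\mathbf{i}(A_\S))_{\mathbf{i} \in I^n}$ is also connected in the sense of the definition, for arbitrary $x, y \in A_\S$ I select words $\mathbf{i}, \mathbf{j} \in I^n$ with $x \in f_\mathbf{i}(A_\S)$ and $y \in f_\mathbf{j}(A_\S)$, and a chain of pairwise-intersecting $n$-level pieces (each of diameter less than $\e$) yields an $\e$-chain from $x$ to $y$ by selecting one point in each nonempty pairwise intersection.

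The principal obstacle is the auxiliary claim that connectedness of $(f_i(A_\S))_{i \in I}$ forces connectedness of $(f_\mathbf{i}(A_\S))_{\mathbf{i} \in I^n}$ for every $n$. I would prove this by induction on $n$: for the step, given $\mathbf{i}, \mathbf{j} \in I^{n+1}$ with differing first letters, the base case supplies a chain $i_1 = l_0, \ldots, l_m = j_1$ in $I$ with $f_{l_p}(A_\S) \cap f_{l_{p+1}}(A_\S) \neq \emptyset$; each intersection point $z_p = f_{l_p}(a_p) = f_{l_{p+1}}(b_p)$ admits length-$n$ addresses $\mathbf{s}_p, \mathbf{t}_p \in I^n$ with $a_p \in f_{\mathbf{s}_p}(A_\S)$, $b_p \in f_{\mathbf{t}_p}(A_\S)$, producing direct adjacencies $(l_p, \mathbf{s}_p) \sim (l_{p+1}, \mathbf{t}_p)$ in $I^{n+1}$. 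Within each fixed first letter $l_p$, applying $f_{l_p}$ to chains provided by the inductive hypothesis on $I^n$ bridges the remaining suffixes, and concatenating these transitions produces the required chain in $I^{n+1}$.
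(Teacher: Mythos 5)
The paper does not prove this statement at all---it is quoted verbatim from Kigami \cite[Theorem 1.6.2]{K}---so there is no internal proof to compare against; I can only assess your argument on its own and against the standard one. Your proof is correct. The direction $(2)\Rightarrow(1)$ is the usual separation argument: the classes of the intersection-generated equivalence relation on $I$ yield a partition of $A_{\S}$ into two nonempty disjoint compact sets, and this is exactly how the cited source argues. For $(1)\Rightarrow(2)$ you take a more elementary route than Kigami: he proves the stronger statement that the chain condition implies \emph{arcwise} connectedness, by an inductive construction of paths through the cell structure, whereas you only need topological connectedness and get it from the $\e$-chain (well-chained) characterization of connected compact metric spaces. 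All the ingredients you use are sound: $c=\max_{i\in I}\Lip(f_i)<1$ and the closedness of the finite unions both use that $I$ is finite (which is part of the IFS hypothesis); $\diam(f_{\mathbf i}(A_\S))\le c^nD$ with $A_\S=\bigcup_{\mathbf i\in I^n}f_{\mathbf i}(A_\S)$ is the correct iteration of self-similarity; and your inductive proof that level-$1$ chain-connectedness propagates to level $n$ is the one genuinely nontrivial step, which you handle correctly---within a fixed first letter $l$ you push a level-$n$ chain forward by $f_l$ (intersections survive under $f_l$ since $f_l(B\cap C)\subseteq f_l(B)\cap f_l(C)$ applied to a common point), and across first letters you convert each witness $z_p=f_{l_p}(a_p)=f_{l_{p+1}}(b_p)$ into an adjacency of level-$(n+1)$ cells by choosing level-$n$ addresses for $a_p$ and $b_p$. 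The resulting $\e$-chain argument (one point in each consecutive intersection, all lying in $A_\S$) closes the proof. The trade-off versus the cited proof is that yours is shorter and self-contained but yields only connectedness, not arcwise connectedness; for the purposes of this paper (where Theorem \ref{thm_atcon} is used only to get connectedness of $A_{\S_{\{i,j\}}}$ in Claim \ref{cl_1}) that is entirely sufficient.
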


For the sake of brevity, in the sequel, for the elements of $\ell^2(A')$ we shall sometimes use the notation $(x_{j})$ instead $(x_{j})_{j\in A'}$.

\begin{definition}
	Given a graph $\mathcal{G}$ on $A$, one can consider the IIFS 
	\begin{equation*}
	\mathcal{S}_{\mathcal{G}}^{A}\coloneqq ((\ell^{2}(A'), d_e ),(f_{i})_{i\in A})
	\end{equation*}
	where $f_{i}\colon\ell^{2}(A')\to \ell^{2}(A')$ is given by 
	$f_{i}((x_{k}))=(c_{ikk}x_{k})_{k\in A'}+\frac{1}{2}\mathbf{u}_{i}$ for every 
	$(x_{k})\in \ell^{2}(A')$ and every $i\in A$.
\end{definition}

In the above framework, the main result from \cite{MM_graph} states that the space $\w_\G^A$ is the attractor of IIFS above:
\begin{equation*}
\omega _{\mathcal{G}}^{A}=A_{\mathcal{S}_{\mathcal{G}}^{A}},
\end{equation*}
i.e. the embedded version of $J_{A}^{\mathcal{G}}$ on $\ell^{2}(A')$
is a generalized Hutchinson-Barnsley fractal. In particular, considering the complete graph $\mathcal{CG}=(A,\{\{a,b\}\}_{a,b\in A})$ and the classical Lipscomb space, one obtains the main result of \cite{MM_lips}. In addition, $\omega _{\mathcal{CG}}^{A}$ is connected (see \cite{DM}), so a natural idea is to find necessary and sufficient conditions for the connectedness of 
$\omega _{\mathcal{G}}^{A}$, for arbitrary graph $\G$.  The main result of this paper states that

\begin{theorem}\label{main}
	The following statements are equivalent: 
	\begin{enumerate}[(i)]
		\item the space $J_{A}^{\mathcal{G}}$ is connected
		\item the space $\omega _{\mathcal{G}}^{A}$  is connected
		\item the graph $\G=(A,E)$ is a connected graph.
	\end{enumerate}
\end{theorem}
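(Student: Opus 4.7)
The equivalence (i) $\Leftrightarrow$ (ii) follows at once from Theorem~\ref{s_hom}, so the plan is to prove (i) $\Rightarrow$ (iii) and (iii) $\Rightarrow$ (ii).

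For (i) $\Rightarrow$ (iii), I argue contrapositively. Suppose $\G$ is disconnected and fix a partition $A = A_1 \sqcup A_2$ with no edges across. If $x \Gsim y$ with $x \neq y$, so $x = sa\vec b$, $y = sb\vec a$ with $\{a,b\}\in E$, then either $s$ is nonempty and $x_1 = y_1 = s_1$, or $s$ is empty and $\{x_1, y_1\} = \{a,b\}\in E$. In either case $x_1$ and $y_1$ lie in the same part of the partition. Hence $\pi^{-1}(\pi(A_s \times A^\N)) = A_s \times A^\N$ for $s \in \{1,2\}$, which is clopen in $N(A)$, so $J_A^\G = \pi(A_1 \times A^\N) \sqcup \pi(A_2 \times A^\N)$ is a nontrivial clopen partition, contradicting (i).

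For (iii) $\Rightarrow$ (ii), I handle first the case when $A$ is finite, so that $\S_\G^A$ is a genuine IFS and Theorem~\ref{thm_atcon} applies. A direct expansion yields $p_\G(ix) = f_i(p_\G(x))$ for every $i \in A$ and $x \in N(A)$, hence $f_i(\omega_\G^A) = p_\G(\{i\} \times A^\N)$. Combining this with the injectivity of $s_\G$ from Theorem~\ref{s_hom}, for $i \neq j$ the intersection $f_i(\omega_\G^A) \cap f_j(\omega_\G^A)$ is nonempty iff there exist $x, y \in N(A)$ with $ix \Gsim jy$; by the same case analysis as above, this happens iff $\{i,j\} \in E$, with explicit witness $p_\G(i\vec j) = p_\G(j\vec i)$. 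Hence the family $(f_i(\omega_\G^A))_{i \in A}$ is connected, in the sense of the definition preceding Theorem~\ref{thm_atcon}, if and only if the graph $\G$ is connected, and Theorem~\ref{thm_atcon} closes the finite case.

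For general (possibly infinite) $A$, the plan is to approximate $\omega_\G^A$ by finite connected sub-attractors. Let $\F$ denote the family of finite $F \subseteq A$ with $z \in F$ and the induced subgraph $\G|_F$ connected. For any $F \in \F$, the restriction of $p_\G$ to $F^\N$ coincides, via the natural inclusion $\ell^2(F\setminus\{z\}) \hookrightarrow \ell^2(A')$, with $p_{\G|_F}$, so $\omega_{\G|_F}^F \subseteq \omega_\G^A$ is connected by the finite case and contains the common point $p_\G(\vec z) = 0$. Therefore $\Omega := \bigcup_{F \in \F} \omega_{\G|_F}^F$ is a union of connected sets sharing a point, hence connected. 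To see that $\Omega$ is dense in $\omega_\G^A$, approximate an arbitrary $x \in N(A)$ by the truncations $x^n := x_1\, x_2 \cdots x_n \vec{x_n}$, which satisfy $x^n \to x$ in $N(A)$: then $p_\G(x^n) \to p_\G(x)$ by Lemma~\ref{lem_p_cont}, and each $p_\G(x^n)$ lies in $\omega_{\G|_{F_n}}^{F_n} \subseteq \Omega$ for any $F_n \in \F$ containing $\{x_1, \ldots, x_n, z\}$ (such an $F_n$ exists because $\G$ is connected: adjoin paths in $\G$ joining the $x_k$ to $z$). Thus $\omega_\G^A = \overline{\Omega}$ is connected. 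The main obstacle is precisely this finite-to-infinite passage; it relies on the ability to enlarge any finite vertex set into a finite connected induced subgraph (which is where connectedness of $\G$ is crucially used), on the naturality of the embeddings $\omega_{\G|_F}^F \hookrightarrow \omega_\G^A$, and on continuity of $p_\G$ to transfer the finite-IFS conclusion.
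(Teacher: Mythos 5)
Your proposal is correct, and while the easy parts ((i)$\Leftrightarrow$(ii) via Theorem~\ref{s_hom} and the contrapositive argument for (i)$\Rightarrow$(iii) via the $\Gsim$-saturated clopen partition $\bigcup_{s\in A_1}U_s\cup\bigcup_{t\in A_2}U_t$) coincide with the paper's, your proof of (iii)$\Rightarrow$(ii) takes a genuinely different route. The paper never isolates the finite case: it applies Theorem~\ref{thm_atcon} only to the two-map systems $\mathcal{S}_{\{i,j\}}$ attached to the edges, shows the family $(A_{\mathcal{S}_{\{i,j\}}})_{\{i,j\}\in E}$ is connected via paths in $\G$, and then reaches the full attractor by iterating the fractal operator on $\mathbf{A}=\overline{\bigcup_{\{i,j\}\in E}A_{\mathcal{S}_{\{i,j\}}}}$, using the nesting $A_n\subseteq A_{n+1}$ and the Kuratowski-limit identity $A_{\mathcal{S}^A_\G}=\overline{\bigcup_n A_n}$. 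You instead apply Theorem~\ref{thm_atcon} to the whole finite IFS, using $p_\G(ix)=f_i(p_\G(x))$ and the injectivity criterion $p_\G(x)=p_\G(y)\iff x\Gsim y$ to translate ``family connected'' into ``graph connected'', and then handle infinite $A$ by exhausting $\omega^A_\G$ with the connected sub-attractors $\omega^F_{\G|_F}=p_\G(F^\N)$ over finite connected induced subgraphs $F\ni z$, all sharing the point $p_\G(\vec z)=0$, and invoking density of their union (via the eventually-constant truncations $x^n=x_1\cdots x_n\vec{x_n}$ and Lemma~\ref{lem_p_cont}) plus closedness of $\omega^A_\G=A_{\mathcal{S}^A_\G}$. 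Your finite-case argument buys slightly more --- it gives the equivalence of family-, attractor-, and graph-connectedness directly, hence an independent IFS-theoretic proof of (ii)$\Rightarrow$(iii) for finite $A$ --- and the finite-to-infinite passage by density is arguably more elementary than the paper's appeal to L-convergence of the iterates; the paper's argument, on the other hand, stays entirely within the IIFS formalism and does not need the naturality of the embeddings $\ell^2(F\setminus\{z\})\hookrightarrow\ell^2(A')$. The only points worth making explicit in a write-up are that $F$ should have at least two elements (so that $\G|_F$ falls under the standing hypothesis on $A$) and that the induced subgraph structure is what guarantees $p_\G\rest F^\N=p_{\G|_F}$, since the coefficients $c_{aa'b}$ depend on membership of $\{a,a'\}$ in $E$; both are immediate.
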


The equivalence $(i)\Leftrightarrow(ii)$ follows from the fact proved in Theorem \ref{s_hom}, that spaces $J_{A}^{\mathcal{G}}$ and $\omega _{\mathcal{G}}^{A}$ are homeomorphic. We justify implications $(iii)\Rightarrow(ii)$ and $(i)\Rightarrow(iii)$  by proving Propositions \ref{<=} and \ref{=>}.

\begin{proposition}\label{<=}
	If $\mathcal{G}=(A,E)$ is a connected graph, then $\omega _{\mathcal{G}}^{A}$ is connected.
\end{proposition}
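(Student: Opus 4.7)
My plan is to exhibit an explicit connected subset $\Lambda\subseteq\w^A_\G$ containing every basis vector $\ub_a$, and then to show that the iterates $F_{\S_\G^A}^{[n]}(\Lambda)$ form a nondecreasing sequence of connected sets whose union is dense in $\w^A_\G$. Since the closure of a connected set is connected, this will give connectedness of $\w^A_\G$.

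Concretely, for each edge $\{a,b\}\in E$ with $a\neq b$ I set $L_{ab}\coloneqq p_\G(\{a,b\}^\N)\subseteq\w^A_\G$. Because every pair of symbols in $\{a,b\}$ is adjacent in $\G$ (including the loops), the coefficients $c_{\cdot\cdot\cdot}$ appearing in the definition of $p_\G$ restricted to words from $\{a,b\}$ agree with those of the classical two-symbol complete graph; hence $L_{ab}$ is the familiar Lipscomb interval joining $\ub_a$ and $\ub_b$, and in particular it is connected. Setting $L_{aa}\coloneqq\{\ub_a\}$ and $\Lambda\coloneqq\bigcup_{\{a,b\}\in E}L_{ab}$, the fact that $L_{ab}\cap L_{bc}\ni\ub_b$ together with the connectedness of $\G$ produces, for any two vertices $a,a'$, a finite chain of overlapping arcs inside $\Lambda$ joining $\ub_a$ to $\ub_{a'}$; hence $\Lambda$ is connected.

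For the inductive step assume $B\coloneqq F_{\S_\G^A}^{[n]}(\Lambda)$ is connected and contains every $\ub_a$. Each $f_i(B)$ is connected as a continuous image of $B$; for every edge $\{i,j\}\in E$ the point $f_i(\ub_j)=f_j(\ub_i)=\tfrac12(\ub_i+\ub_j)$ lies in both $f_i(B)$ and $f_j(B)$, so connectedness of $\G$ makes the family $(f_i(B))_{i\in A}$ a connected family in the sense defined earlier. Therefore $\bigcup_{i\in A}f_i(B)$ and its closure $F_{\S_\G^A}^{[n+1]}(\Lambda)$ are connected, and this closure still contains every $\ub_a=f_a(\ub_a)\in f_a(\Lambda)$. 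The shift identity $p_\G(x)=f_{x_1}(p_\G(\sigma(x)))$ on $N(A)$ moreover gives $L_{ab}\subseteq f_a(\Lambda)\cup f_b(\Lambda)$, so $\Lambda\subseteq F_{\S_\G^A}(\Lambda)$ and the iterates really are nondecreasing.

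For density, take any $p_\G(x)\in\w^A_\G$. Expanding $p_\G(x)=f_{x_1}\cmp\cdots\cmp f_{x_n}(p_\G(\sigma^n(x)))$ and comparing with $q_n\coloneqq f_{x_1}\cmp\cdots\cmp f_{x_n}(\ub_{x_{n+1}})\in F_{\S_\G^A}^{[n]}(\Lambda)$, the bound $\Lip(f_i)\le\tfrac12$ together with boundedness of $\w^A_\G$ gives $\|p_\G(x)-q_n\|\le 2^{-n}\diam(\w^A_\G)\to 0$, hence $p_\G(x)\in\overline{\bigcup_n F_{\S_\G^A}^{[n]}(\Lambda)}$. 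The reverse inclusion is immediate from monotonicity of $F_{\S_\G^A}$, finishing the proof. I expect the principal obstacle to be the very first step, namely checking that each $L_{ab}$ really is connected; this reduces to the folklore fact that the classical two-symbol Lipscomb space is an arc, which must either be cited or derived from the explicit form of $p_\G$ on $\{a,b\}^\N$. Once this is in place, everything else is combinatorial bookkeeping around the shift identity and the contraction estimate.
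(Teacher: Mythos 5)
Your proposal is correct and is essentially the paper's own argument: your $L_{ab}=p_\G(\{a,b\}^{\N})$ is exactly the attractor $A_{\S_{\{a,b\}}}$ of the two-map subsystem $\{f_a,f_b\}$ that the paper takes as its building block, your $\Lambda$ is the paper's set $\A=\bigcup_{\{i,j\}\in E}A_{\S_{\{i,j\}}}$ chained together by connectedness of $\G$, and the increasing connected iterates of $F_{\S_{\G}^{A}}$ together with a density argument reproduce its Claims 4--6 (the paper invokes Kuratowski limits where you use the $\Lip(f_i)\le\tfrac12$ contraction estimate, and it glues the images $f_i(A_n)$ through the hub $A_0$ where you chain them directly along edges). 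The one step you flag as the principal obstacle --- connectedness of $L_{ab}$ --- is harmless: the paper obtains it from the connectedness criterion of Theorem~\ref{thm_atcon} via $f_a(\ub_b)=f_b(\ub_a)=\tfrac12(\ub_a+\ub_b)$, and alternatively one can note that on the span of $\ub_a,\ub_b$ both maps are similarities of ratio $\tfrac12$ (all relevant coefficients $c_{\cdot\cdot\cdot}$ equal $\tfrac12$ when $\{a,b\}\in E$), so $L_{ab}$ is literally the segment joining $\ub_a$ and $\ub_b$.
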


\begin{proof}
	Let us start by introducing some notation. For $i,j\in A$ such that $\{i,j\}\in E$, we consider the iterated function system 
	\begin{equation*}
		((\ell^{2}(A'), d_e),\{f_{i},f_{j}\}):=\mathcal{S}_{\{i,j\}}.
	\end{equation*}
	
	We shall also consider
	$$\A\coloneqq \bigcup_{\{i,j\}\in E} A_{\S_{\{i,j\}}}$$
	and
	\begin{equation*}
		\mathbf{A}\coloneqq\overline{\A}.
	\end{equation*}
	Since $A_{\mathcal{S}_{\{i,j\}}}\subseteq A_{\mathcal{S}_{\mathcal{G}}^{A}}$
	for every $\{i,j\}\in E$ (see \cite[Theorem 4.1]{MM_shift}) we infer that $\mathcal{%
		A\subseteq }A_{\mathcal{S}_{\mathcal{G}}^{A}}$, so $\mathbf{A}=\overline{%
		\mathcal{A}}\mathcal{\subseteq }A_{\mathcal{S}_{\mathcal{G}}^{A}}$.
	Therefore, as $A_{\mathcal{S}_{\mathcal{G}}^{A}}\in
	P_{b,cl}(\ell^{2}(A'))$, we conclude that $\textbf{A}\in
	P_{b,cl}(\ell^{2}(A'))$ and, consequently, we can consider the
	sequence $(A_{n})_{n\in \mathbb{N\cup }\{0\}}\subseteq
	P_{b,cl}(\ell^{2}(A'))$ given by
	$$A_{0}\coloneqq\mathbf{A} \quad\text{ and } \quad A_{n+1}\coloneqq F_{\mathcal{S}_{\mathcal{G}}^{A}}(A_{n}),$$
	for every $n\in \mathbb{N\cup }\{0\}$.
	
	\begin{claim}\label{cl_1}
		$A_{\mathcal{S}_{\{i,j\}}}$ is connected for every $\{i,j\}\in E$.
	\end{claim}
	
	\begin{proof}[Proof of Claim \ref{cl_1}]
		Note that for $i,j\in A$ we have
		\begin{equation*}
			f_{i}(\mathbf{u}_{i})=(c_{ikk}\delta _{ik})_{k\in A'}+\frac{1}{2}%
			\mathbf{u}_{i}=\frac{1}{2}\mathbf{u}_{i}+\frac{1}{2}\mathbf{u}_{i}=\mathbf{u}%
			_{i}
		\end{equation*}
		and, similarly,
		\begin{equation*}
			f_{j}(\mathbf{u}_{j})=\mathbf{u}_{j}\text{,}
		\end{equation*}%
		i.e. $\mathbf{u}_{i}$ is the fixed point of $f_{i}$ and $\mathbf{u}_{j}$ is
		the fixed point of $f_{j}$. Consequently 
		\begin{equation*}
			\mathbf{u}_{i},\mathbf{u}_{j}\in A_{\mathcal{S}_{\{i,j\}}}\text{.}  
		\end{equation*}
		Since  $\{i,j\}\in E$ and
		\begin{equation*}
			f_{i}(\mathbf{u}_{j})=(c_{ikk}\delta _{jk})_{k\in A'}+\frac{1}{2}%
			\mathbf{u}_{i}=c_{ijj}\mathbf{u}_{j}+\frac{1}{2}\mathbf{u}_{i}=\frac{1}{2}\mathbf{u}_{j}+\frac{1}{2}\mathbf{u}_{i}=f_{j}(%
			\mathbf{u}_{i})\in f_{i}(A_{\mathcal{S}%
				_{\{i,j\}}})\cap f_{j}(A_{\mathcal{S}_{\{i,j\}}})\text{,}
		\end{equation*}
		we infer that
		\begin{equation*}
			f_{i}(A_{\mathcal{S}_{\{i,j\}}})\cap f_{j}(A_{\mathcal{S}_{\{i,j\}}})\neq
			\emptyset \text{.}
		\end{equation*}%
		We conclude, via Theorem \ref{thm_atcon}, that $A_{\mathcal{S}_{\{i,j\}}}$ is connected. Note also that all reasoning and calculations are true also in the case $i=j$. Then $A_{\S_{\{i\}}}=\{\ub_i\}$. 
	\end{proof}

	\begin{claim}\label{cl_2}
		The family $(A_{\mathcal{S}_{\{i,j\}}})_{\{i,j\}\in E}$ is connected.
	\end{claim}
	
	\begin{proof}[Proof of Claim \ref{cl_2}.]
		Let us consider $\{i,j\},\{k,l\}\in E$. Then, as $\mathcal{G}$ is connected, there exist $n\in 
		\mathbb{N}$ and $\{j_{1},...,j_{n}\}\subseteq A$ such that $j_{1}=j$, $%
		j_{n}=k$ and 
		\begin{equation*}
			\{j_{s},j_{s+1}\}\in E,
		\end{equation*}
		for every $s\in \{1,...,n-1\}$. Since, thanks of \cite[Theorem 4.1]{MM_shift},  $\emptyset \neq A_{\S_{\{t\}}}\subs A_{\S_{\{t,t'\}}}$ for every $t,t'\in A$, we infer that 
		\begin{align*}
			A_{\S_{\{j\}}}=A_{\S_{\{j_1\}}}\subs A_{\S_{\{i,j\}}}\cap A_{\S_{\{j_1,j_2\}}}\neq\emptyset &, \\
			A_{\S_{\{j_s\}}}\subs A_{\S_{\{j_{s-1},j_{s}\}}}\cap A_{\S_{\{j_s,j_{s+1}\}}}\neq\emptyset & \text{ for every }s=2,...,n-1,\\
			A_{\S_{\{k\}}}=A_{\S_{\{j_n\}}}\subs A_{\S_{\{j_{n-1},j_n\}}}\cap A_{\S_{\{k,l\}}}\neq\emptyset &,
		\end{align*}
		which means that the family $(A_{\mathcal{S}_{\{i,j\}}})_{\{i,j\}\in E}$ is connected.
	\end{proof}

	\begin{claim}\label{cl_3}
		The set $\mathbf{A}=\overline{\A}=\overline{\bigcup_{\{i,j\}\in E} A_{\S_{\{i,j\}}}}$ is connected.
	\end{claim}
	
	\begin{proof}[Proof of Claim \ref{cl_3}.]
		It follows, via \cite[Theorem 1.5 page 108 and Problem 6, page 117]{D}, from Claim \ref{cl_1} and Claim \ref{cl_2} that $\mathcal{A}$ is connected, so $\overline{\mathcal{A}}=\mathbf{A}$ is connected.
	\end{proof}
	
	\begin{claim}\label{cl_4}
		We have $A_{n}\subseteq A_{n+1}$  for every $n\in \mathbb{N}\cup \{0\}$.
	\end{claim}
	
	\begin{proof}[Proof of Claim \ref{cl_4}.]
		Since 
		\begin{equation*}
			A_{\mathcal{S}_{\{i,j\}}}=F_{\mathcal{S}_{\{i,j\}}}(A_{\mathcal{S}%
				_{\{i,j\}}})\subseteq F_{\mathcal{S}_{\mathcal{G}}^{A}}(A_{\mathcal{S}%
				_{\{i,j\}}})\text{,}
		\end{equation*}%
		for every $\{i,j\}\in E$, we obtain%
		\begin{equation*}
			\mathcal{A}=\bigcup_ {\{i,j\}\in E}A_{\mathcal{S}_{\{i,j\}}}\subseteq
			F_{\mathcal{S}_{\mathcal{G}}^{A}}\Big(\bigcup_ {\{i,j\}\in E}A_{\mathcal{S%
				}_{\{i,j\}}}\Big)=F_{\mathcal{S}_{\mathcal{G}}^{A}}(\mathcal{A})\subseteq F_{%
				\mathcal{S}_{\mathcal{G}}^{A}}(A_{0})=A_{1}\text{,}
		\end{equation*}%
		so%
		\begin{equation*}
			\overline{\mathcal{A}}=\mathbf{A}=A_{0}\subseteq A_{1}\text{.}
		\end{equation*}
		By mathematical induction we easily conclude that $A_{n}\subseteq A_{n+1}$
		for every $n\in \mathbb{N\cup }\{0\}$.
	\end{proof}
	
	\begin{claim}\label{cl_5}
		The set $A_{n}$ is connected for every $n\in \mathbb{N\cup }\{0\}$.
	\end{claim}
	
	\begin{proof}[Proof of Claim \ref{cl_5}.]
		Claim \ref{cl_3} ensures that $A_{0}$ is connected.
		
		Now suppose that $A_{n}$ is connected. We will prove that $A_{n+1}$ is connected.
		Note that since
		\begin{equation*}
			A_{n+1}=F_{\mathcal{S}_{\mathcal{G}}^{A}}(A_{n})=\overline{\bigcup_{i\in A}f_{i}(A_{n})}\overset{\text{Claim \ref{cl_4}}}{=}\overline{A_{0}\cup \Big(\bigcup_{i\in A}f_{i}(A_{n})\Big)},
		\end{equation*}
		it suffices to prove that $A_{0}\cup \big(\bigcup_{i\in A}f_{i}(A_{n})\big)$ is connected.
		
		Let us note that
		\begin{equation}\label{eq3}
			A_{0}\cap f_{i}(A_{n})\neq \emptyset ,
		\end{equation}%
		for every $i\in A$. 
		Indeed, by the fact that 
			$A_{\S_{\{i\}}}\subs A_0\overset{\text{Claim \ref{cl_4}}}\subs A_n$, we have  
				$f_i(A_{\S_{\{i\}}})\subs f_i(A_n)$ and $f_i(A_{\S_{\{i\}}})=A_{\S_{\{i\}}}\subs A_0$.
		
		The set $A_{0}$ is connected (see Claim \ref{cl_3}) and, based on the continuity of $f_{i}$
		and on the connectedness of $A_{n}$, the set $f_{i}(A_{n})$ is connected,
		for every $i\in A$.
		
		Via \cite[Problem 6, page 117]{D}, the property (\ref{eq3}) ensures that $A_{0}\cup \big(\bigcup_{i\in A}f_{i}(A_{n})\big)$ is connected, so the justification of Claim \ref{cl_5}
		is complete. 
	\end{proof}
	
	\begin{claim}\label{cl_6}
		$$\overline{\bigcup_{n\in \mathbb{N\cup }\{0\}} A_{n}}=A_{\mathcal{S}_{\mathcal{G}}^{A}}.$$
	\end{claim}
	
	\begin{proof}[Proof of Claim \ref{cl_6}.] Note that the attractor $A_{\mathcal{S}_{\mathcal{G}}^{A}}$ is a limit (with respect to the Vietoris topology and Hausdorff metric on $P_{b,cl}(X)$) of the sequence $(A_n)_{n\in\N}$ satisfying Claim \ref{cl_4}. Then $A_{\mathcal{S}_{\mathcal{G}}^{A}}$ is so called Kuratowski limit (the sequence $(A_n)_{n\in\N}$ is L-convergent). By the properties of such limit (see \cite[4.16]{Nad}) we obtain
			 $$A_{\mathcal{S}_{\mathcal{G}}^{A}}=\overline{\bigcup_{n=0}^\infty A_n}.$$
	\end{proof}
	
	Taking into account \cite[Problem 6, page 117]{D}, Claim \ref{cl_4} and Claim \ref{cl_5}, we
	infer that $\bigcup_{n\in \mathbb{N\cup }\{0\}}A_{n}$ is connected.
	Consequently we conclude that $\omega _{\mathcal{G}}^{A}=A_{\mathcal{S}_{
			\mathcal{G}}^{A}}\overset{\text{Claim \ref{cl_6}}}{=}\overline{\bigcup_{n\in \mathbb{
				N\cup }\{0\}}A_{n}}$ is connected.
\end{proof}

	Now we prove the implication $(i)\Rightarrow (iii)$ from the main Theorem \ref{main}.
\begin{proposition}\label{=>}
	If the graph Lipscomb space $J^\G_A$ is connected, then $\mathcal{G}=(A,E)$ is a connected graph.
\end{proposition}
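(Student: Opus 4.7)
The plan is to prove the contrapositive: assume $\G=(A,E)$ is disconnected and produce a separation of $J^\G_A$. Pick any $v\in A$ and let $A_1$ be the set of vertices reachable from $v$ via paths of adjacent vertices (i.e.\ the component of $v$), and set $A_2\coloneqq A\setminus A_1$. By assumption both $A_1$ and $A_2$ are nonempty, and the crucial property is: no edge of $E$ joins $A_1$ with $A_2$, so every $\{a,b\}\in E$ has $a,b$ lying in the same $A_i$.

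Next, define the cylinder sets $U_i\coloneqq \{x\in N(A)\colon x_1\in A_i\}$ for $i=1,2$. These are clopen in the Baire space $N(A)$, disjoint, nonempty, and together they cover $N(A)$. The key step is to check that each $U_i$ is saturated under $\Gsim$, so that $\pi(U_i)$ is clopen in the quotient $J^\G_A$. Take $x\Gsim y$ with $x\neq y$: by definition there is a (possibly empty) finite word $s$ and vertices $a,b$ with $\{a,b\}\in E$ such that $x=sa\vec b$ and $y=sb\vec a$. If $s$ is nonempty, $x_1=y_1$ and there is nothing to prove. If $s$ is empty, then $x_1=a$, $y_1=b$, and because $\{a,b\}\in E$ the vertices $a,b$ lie in the same component, so $x_1$ and $y_1$ belong to the same $A_i$.

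Consequently $\pi^{-1}(\pi(U_i))=U_i$ for $i=1,2$, and so by the definition of the quotient topology $\pi(U_1)$ and $\pi(U_2)$ are disjoint nonempty open subsets whose union is $J^\G_A$. This is a separation, contradicting the connectedness of $J^\G_A$. Hence $\G$ must be connected.

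The only step requiring any real attention is the saturation claim, which is immediate from the very specific shape of the relation $\Gsim$ (any two distinct $\Gsim$-equivalent words either share their first letter or have first letters linked by an edge of $E$). There is no substantial obstacle; the proof is essentially a clean clopen-decomposition argument in $N(A)$ pushed down to the quotient.
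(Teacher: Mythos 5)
Your proof is correct and follows essentially the same route as the paper: both arguments split $A$ into two nonempty parts with no edge between them, form the corresponding clopen unions of first-letter cylinders in $N(A)$, and observe that any two distinct $\Gsim$-equivalent words have first letters that are either equal or adjacent, so the partition is saturated and descends to a separation of $J^\G_A$. Your explicit verification of saturation via the two cases ($s$ empty or nonempty) is a slightly more detailed write-up of the same key observation.
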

\begin{proof}
	We shall reason by contradiction. We show that, if the graph $\G=(A,E)$ is not connected graph then the quotient space $N(A)/_\Gsim=J^\G_A$ is disconnected. Note that the topology on $N(A)$ is generated by the basic sets 
	$$U_s=\{x\in N(A)\colon x\rest n=s\},$$
	where $x\rest n = x_1x_2\dots x_n$. Thus the set $U_s$ contains all infinite words beginning with the given finite word $s$ of the length $n$.
	\newline
	If $\G$ is a disconnected graph then there exists a partition $S,T$ of $A$ such that  $\{s,t\}\notin E$ for every $s\in S$ and $t\in T$. Then the clopen sets 
	$$S'=\bigcup_{s\in S} U_s \quad\text{ and }\quad T'=\bigcup_{t\in T} U_t$$
	form a partition of $N(A)$. For every $x,y\in N(A)$ we show that if $x~\Gsim~y$  then $x,y\in S'$ or $x,y\in T'$. Indeed, if $x~\Gsim~y$ then their first letters are adjacent: $\{x_1,y_1\}\in E$. This means that $x_1,y_1\in S$ or $x_1,y_1\in T$, which ends the proof. 
\end{proof}

Combining Theorem \ref{s_hom}, Proposition \ref{<=} and Proposition \ref{=>}, we get the proof of the main Theorem \ref{main}.

\section{Inverse limit of a sequence of finite graphs}

Let $A$ be a finite nonempty set. Then the space $N(A)$ can be present as the inverse limit of a sequence of finite graphs on $A^n$. This approach helps to obtain additional results about the graph Liscomb space $J_A^\G = N(A)/_{\Gsim}$.  First we present basic fact connected to graph morphisms.

We set the following notation for sets of vertices and edges: $\G=(V,E)=(V_\G,E_\G)$, $\G'=(V',E')$, $\G_n=(V_n,E_n)$. 
\begin{definition}
	A \textit{homomorphism} of the graphs $\G$ and $\G'$ is a function $f\colon V\to V'$ such that for every $u,v\in V$
	$$\{u,v\}\in E \quad \Rightarrow \quad \{f(u),f(v)\}\in E'.$$
\end{definition}

\begin{definition}
	A \emph{proper epimorphism} is a mapping $\map f {V}{V'}$ such that $f(V)=V'$ and for every $x',y' \in V'$ the following equivalence holds:
	$$\{x',y'\}\in E' \iff \text{ there exists }x,y \in V \text{ such that } \{x,y\}\in E \text{ and } x' = f(x) \text{ and } y' = f(y).$$
\end{definition}
In particular, a proper epimorphism is a surjective graph homomorphism.

Proper epimorphisms have a very nice property:
\begin{lemma}
	Given two compatible graph epimorphisms, $f \cmp g$ is proper $\implies$ $f$ is proper.
\end{lemma}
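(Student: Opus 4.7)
The plan is to unpack the definition of proper epimorphism. Since $f$ is assumed to be an epimorphism, and hence surjective, the only content that needs verification is the edge-lifting condition: every edge of the codomain of $f$ must come from an edge of its domain. I would obtain this by lifting an arbitrary such edge all the way through the composition $f\cmp g$ using its assumed properness, and then pushing the lifted edge one step forward to the intermediate graph via $g$.

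Concretely, write $\map{g}{V}{V'}$ and $\map{f}{V'}{V''}$ for the two epimorphisms, with respective edge sets $E$, $E'$, $E''$. Fix an arbitrary edge $\{x'',y''\}\in E''$. The assumption that $f\cmp g$ is proper yields $x,y\in V$ with $\{x,y\}\in E$ and $(f\cmp g)(x)=x''$, $(f\cmp g)(y)=y''$. Put $x':=g(x)$ and $y':=g(y)$. Because $g$ is a graph homomorphism, $\{x',y'\}=\{g(x),g(y)\}\in E'$, and by construction $f(x')=x''$ and $f(y')=y''$. The converse direction in the definition of proper epimorphism is automatic for every graph homomorphism, so this verifies both implications needed for $f$ to be proper.

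I do not foresee any serious obstacle in carrying out this plan; the statement is essentially a cancellation property baked into the definitions. The only care required is to distinguish which of the three graphs each vertex, edge, or map belongs to, and to note that the surjectivity part of the proper-epimorphism condition comes for free from the hypothesis that $f$ is already an epimorphism. It is perhaps worth observing, by way of contrast, that the symmetric statement ``$g$ must be proper'' is false in general, which explains why only one of the two factors inherits properness from the composition.
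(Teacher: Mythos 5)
Your proof is correct and follows essentially the same route as the paper's: lift an edge of the final codomain through the proper composition $f\cmp g$ to an edge of the initial domain, then push it forward by the homomorphism $g$ to obtain the required edge in the intermediate graph. The only difference is cosmetic (you name the lifted vertices explicitly rather than writing everything in the form $f(g(x))$), and your closing remarks about surjectivity and the automatic converse direction match the paper's implicit reasoning.
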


\begin{proof}
	Suppose $f \cmp g$ is proper and $f(g(x))$ is adjacent to $f(g(y))$. Then there are $x', y'$ such that  $f(g(x')) = f(g(x))$, $f(g(y')) = f(g(y))$ and $x'$ is adjacent to $y'$. Finally, $g(x')$ and $g(y')$ are adjacent, because $g$ is a homomorphism.
\end{proof}

We shall consider inverse limits of sequences of finite graphs with proper epimorphisms.

An inverse sequence is fully described by a sequence $(\G_n)_{n\in\N}$ together with bonding mappings $\map{g^m_n}{V_m}{V_n}$ satisfying the natural conditions, making it a contravariant functor from the natural numbers. Its limit is simply the space of all branches of the tree induced by the sequence, namely all sequences of the form $(x_n)_{n\in\N}$, where $x_n = g^m_n(x_m)$ for every $n < m$. This also gives an idea of a natural topology, namely, a basic open set is of the form
$$U_s = \{x \in V_\G\colon x \rest n = s\},$$ 
where $s$ is a finite initial branch of length $n$ and $V_\G$ denotes the branch space.

Whenever all $\G_n$'s are finite, $\G$ is compact and zero-dimensional (also called \emph{totally disconnected}). 

Given an inverse sequence of finite graphs $(\G_n)_{n\in\N}$ with bonding mappings $g^m_n$ ($m>n$), its limit carries a natural closed graph adjacency relation, defined by
$$x \sim y \iff \forall\; n\in\N \; \{x_n ,y_n\}\in E_n.$$
{This limit is also a graph $\G=(V_\G,\sim)$, where the set of edges is exceptionally denoted by $\sim$.} Recall that we consider simple graphs with loops everywhere, namely, structures with a symmetric reflexive binary relation.

In any case, a sequence as above induces a closed graph relation on its limit.
It turns out that the converse holds:

\begin{proposition}
	Let $K$ be a metrizable zero-dimensional space, endowed with a closed graph relation. Then there is an inverse sequence of finite graphs with proper epimorphisms whose limit is $K$.
\end{proposition}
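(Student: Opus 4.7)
The plan is to realize $K$ as the inverse limit of its finite clopen partitions, promoted to graphs via the given relation. Since the limit of any inverse sequence of finite graphs is compact, I implicitly assume $K$ is compact (as required for the theorem to hold). First, using that $K$ is a compact metrizable zero-dimensional space, I would fix a compatible metric and choose a sequence of finite clopen partitions $\mathcal{P}_1 \prec \mathcal{P}_2 \prec \cdots$ refining each other, with $\operatorname{mesh}(\mathcal{P}_n) \to 0$. Then I would define the $n$-th graph by $V_n = \mathcal{P}_n$, declaring two cells $P,Q \in V_n$ adjacent iff there exist $x \in P$ and $y \in Q$ with $x \sim y$ in $K$. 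Reflexivity of the resulting relation follows from reflexivity of $\sim$ (every piece contains a loop witness), and symmetry is immediate. The bonding map $g^{n+1}_n \colon V_{n+1} \to V_n$ sends each $Q \in \mathcal{P}_{n+1}$ to the unique $P \in \mathcal{P}_n$ containing it.

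Next I would check that each $g^{n+1}_n$ is a proper epimorphism. Surjectivity is clear because $\mathcal{P}_{n+1}$ refines $\mathcal{P}_n$ and no piece is empty. To see properness, suppose $P_1 \sim_n P_2$. By definition there exist $x \in P_1$, $y \in P_2$ with $x \sim y$; letting $Q_i \in \mathcal{P}_{n+1}$ be the cell containing $x$ respectively $y$ gives $Q_1 \sim_{n+1} Q_2$ with $g^{n+1}_n(Q_i) = P_i$. Conversely, if $Q_1 \sim_{n+1} Q_2$, the witnesses in the finer partition automatically witness $g^{n+1}_n(Q_1) \sim_n g^{n+1}_n(Q_2)$, so the bonding map is also a graph homomorphism. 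Hence each $g^{n+1}_n$ is a proper epimorphism, and the composite maps $g^m_n$ inherit this property.

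Finally, I would verify that the canonical map $\phi \colon K \to \varprojlim V_n$, sending $x$ to $(P_n(x))_{n \in \N}$ where $P_n(x)$ is the cell of $\mathcal{P}_n$ containing $x$, is both a homeomorphism and a graph isomorphism. The fact that $\phi$ is a homeomorphism is classical for compact metrizable zero-dimensional spaces equipped with a refining sequence of clopen partitions of vanishing mesh. For the graph structure, one direction is trivial: if $x \sim y$ then $x,y$ themselves witness $P_n(x) \sim_n P_n(y)$ for every $n$. The main obstacle, and the place where the hypothesis that the graph relation on $K$ is \emph{closed} is essential, is the converse: if $P_n(x) \sim_n P_n(y)$ holds for every $n$, then for each $n$ we can choose witnesses $x_n \in P_n(x)$, $y_n \in P_n(y)$ with $x_n \sim y_n$ in $K$. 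Since $\operatorname{mesh}(\mathcal{P}_n) \to 0$, we have $x_n \to x$ and $y_n \to y$, and by closedness of $\sim$ in $K \times K$ we conclude $x \sim y$. This completes the identification of $K$ with the limit of the constructed inverse sequence, both topologically and as a graph.
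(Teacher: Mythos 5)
Your proposal is correct and follows essentially the same route as the paper's own (much sketchier) proof: realize $K$ as the inverse limit of a refining sequence of finite clopen partitions and declare two cells adjacent exactly when they contain $\sim$-related witnesses. You go further than the paper by explicitly verifying properness of the bonding maps, pinpointing where the closedness of the relation is used (to recover $x\sim y$ from witnesses $x_n\to x$, $y_n\to y$), and correctly flagging the implicit compactness assumption that the statement needs.
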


\begin{proof}
	(Sketch) We take a fixed sequence of finite spaces with surjections, whose limit is $K$. Every element of this sequence corresponds to a partition of $K$ into clopen sets. We declare an edge between two elements $U,V$ of a fixed partition if and only if there are $u \in U$, $v \in V$ with $u \sim v$.
	By this way we turn our sequence into graphs with proper epimorphisms.
\end{proof}

\begin{lemma}\label{LMniceonessxxs}
Assume $\map f {V_\G}{V_\H}$ is a proper epimorphism of graphs such that $\H$ is connected and all $f$-fibers (induced subgraphs of the form $f^{-1}(x)$, $x\in V_\H$) are connected. Then $\G$ is connected.
\end{lemma}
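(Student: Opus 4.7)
The plan is to prove connectedness of $\G$ by producing, for arbitrary vertices $u,v\in V_\G$, a walk from $u$ to $v$ obtained by concatenating two kinds of pieces: horizontal pieces living inside a single fiber $f^{-1}(w)$, and single edges of $\G$ that project onto consecutive edges of a chosen walk in $\H$. The properness of $f$ is exactly what lets us lift edges of $\H$ to edges of $\G$, and connectedness of the fibers is what lets us stitch the lifts together.

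More precisely, given $u,v\in V_\G$, I would first invoke connectedness of $\H$ to fix a sequence $w_0,w_1,\dots,w_n\in V_\H$ with $w_0=f(u)$, $w_n=f(v)$, and $\{w_k,w_{k+1}\}\in E_\H$ for every $k\in\{0,\dots,n-1\}$. For each such edge of $\H$, properness of $f$ yields $x_k,y_{k+1}\in V_\G$ with $f(x_k)=w_k$, $f(y_{k+1})=w_{k+1}$, and $\{x_k,y_{k+1}\}\in E_\G$. I would set $y_0\coloneqq u$ and $x_n\coloneqq v$ so that $y_0,x_0\in f^{-1}(w_0)$, $y_n,x_n\in f^{-1}(w_n)$, and $y_k,x_k\in f^{-1}(w_k)$ for the intermediate indices. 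Since each fiber is, by hypothesis, connected as an induced subgraph of $\G$, inside each $f^{-1}(w_k)$ I can pick a walk joining $y_k$ to $x_k$ using only edges of $\G$.

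Concatenating these fiber walks with the edges $\{x_k,y_{k+1}\}$ produces a walk in $\G$ from $u$ to $v$, which is all that is needed for connectedness in the sense of Definition of a connected graph given in the paper. The only subtle point, and what I expect to require a brief remark, is the handling of the endpoints $u$ and $v$ themselves (formally incorporating them into the scheme by the definitions $y_0=u$, $x_n=v$, and invoking fiber-connectedness once at each end), and the fact that the edges $\{x_k,y_{k+1}\}$ really exist in $\G$, which is where one must use that $f$ is a proper epimorphism rather than a mere surjective homomorphism; a plain homomorphism would only give us the implication in the wrong direction and could not lift edges of $\H$. Apart from this, the argument is essentially elementary bookkeeping and does not require induction on $n$, though one could equivalently phrase it inductively by building the walk one fiber at a time.
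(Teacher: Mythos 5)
Your argument is correct, but it takes a different route from the paper. The paper argues by contradiction with a partition: assuming $V_\G = A \cup B$ with no edge between $A$ and $B$, fiber-connectedness forces each fiber $f^{-1}(x)$ entirely into $A$ or into $B$, inducing a partition $V_\H = A' \cup B'$; connectedness of $\H$ then supplies an edge between $A'$ and $B'$, which properness lifts to a forbidden edge between $A$ and $B$. You instead construct an explicit walk: lift a walk in $\H$ edge by edge via properness and stitch the lifted endpoints together with walks inside the (connected) fibers, taking care of the endpoints by setting $y_0=u$, $x_n=v$. Both proofs use exactly the same two ingredients in the same roles (properness to lift edges, fiber-connectedness to move within fibers), so neither is deeper; your version has the advantage of matching the paper's walk-based definition of graph connectedness verbatim and producing an explicit path, while the paper's version is shorter and stylistically consistent with its other partition arguments (e.g.\ Lemma~\ref{LMDwanbueo}), at the cost of implicitly invoking the standard equivalence between walk-connectedness and the nonexistence of a nontrivial partition with no cross-edges. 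Your handling of the degenerate case $f(u)=f(v)$ and of possible loops is sound given that the paper's graphs carry loops at every vertex.
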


\begin{proof}
Suppose $V_\G = A \cup B$, where no $a \in A$ is adjacent to any $b \in B$. Then for each $x\in V_\H$ the fiber $f^{-1}(x)$ is contained either in $A$ or in $B$.
In particular, we have a decomposition $V_\H = A' \cup B'$ such that
$$A = \bigcup_{x \in A'}f^{-1}(x) \quad\text{ and }\quad B = \bigcup_{x \in B'}f^{-1}(x).$$
Since $\H$ is connected, there are $a' \in A'$ and $b' \in B'$ such that $\{a' ,b'\}\in E_\H$. Since $f$ is proper, there are $a \in f^{-1}(a') \subs A$ and $b \in f^{-1}(b) \subs B$ such that $\{a, b\}\in E_\G$. This is a contradiction.
\end{proof}

\begin{lemma}\label{LMDwanbueo}
Let $\G=(V_\G,\sim)$ be the inverse limit of connected finite graphs with proper epimorphisms.
Then for every clopen partition $V_\G = U \cup W$ there exist $u \in U$, $w\in W$ with $u \sim w$.
\end{lemma}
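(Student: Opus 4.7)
My plan is to exploit the compact zero-dimensional topology of $V_\G$. Since each $\G_n$ is finite and discrete, the cylinders $\{x \in V_\G\colon x_n = v\}$, for $v \in V_n$, form a base of clopen sets, and for each fixed $n$ they partition $V_\G$; as $n$ grows these partitions refine one another.

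First I reduce the given clopen partition $V_\G = U \cup W$ to a finite level. Since both $U$ and $W$ are clopen, hence compact, and every point of $V_\G$ has a basic cylinder neighborhood lying entirely in $U$ or in $W$, a standard compactness argument yields an index $n_0 \in \N$ and a partition $V_{n_0} = U' \cup W'$ such that $U = \{x \in V_\G : x_{n_0} \in U'\}$ and $W = \{x \in V_\G : x_{n_0} \in W'\}$. Because $\G_{n_0}$ is connected, I can pick $u' \in U'$ and $w' \in W'$ with $\{u',w'\} \in E_{n_0}$.

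Next I lift this edge through the inverse system by iterating properness. Starting from $(u^{(0)},w^{(0)}) \coloneqq (u',w')$, I define inductively $u^{(k)}, w^{(k)} \in V_{n_0+k}$ satisfying $\{u^{(k)},w^{(k)}\} \in E_{n_0+k}$, $g^{n_0+k+1}_{n_0+k}(u^{(k+1)}) = u^{(k)}$ and $g^{n_0+k+1}_{n_0+k}(w^{(k+1)}) = w^{(k)}$; the inductive step is exactly the ``$\Leftarrow$'' direction of the defining condition of a proper epimorphism, applied to $g^{n_0+k+1}_{n_0+k}$ and the edge $\{u^{(k)},w^{(k)}\}$. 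Setting $u_{n_0+k} \coloneqq u^{(k)}$ for $k \geq 0$ and $u_n \coloneqq g^{n_0}_n(u')$ for $n < n_0$ (and analogously for $w$) produces compatible threads, hence points $u,w \in V_\G$ with $u \in U$ and $w \in W$.

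It remains to verify $u \sim w$, i.e.\ $\{u_n,w_n\} \in E_n$ for all $n \in \N$. For $n \geq n_0$ this holds by construction. For $n < n_0$, the map $g^{n_0}_n$ is a composition of proper epimorphisms, hence a graph homomorphism, so $\{u',w'\} \in E_{n_0}$ yields $\{u_n,w_n\} = \{g^{n_0}_n(u'),g^{n_0}_n(w')\} \in E_n$. The main subtlety I anticipate is the opening compactness reduction: passing from pointwise cylinder neighborhoods at various levels to a single common level $n_0$ at which both $U$ and $W$ are unions of level-$n_0$ cylinders. This uses precisely that the level partitions refine monotonically and jointly generate the topology of $V_\G$.
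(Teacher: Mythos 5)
Your proof is correct and follows essentially the same route as the paper's: a compactness reduction of the clopen partition to a single finite level $n_0$, connectedness of $\G_{n_0}$ to produce an edge between $U'$ and $W'$, and a lift of that edge to an adjacent pair $u\sim w$ in the limit. The only difference is one of detail: where the paper simply asserts that the canonical projection $g^\infty_{n_0}$ is a proper epimorphism onto $\G_{n_0}$, you justify exactly that fact by the level-by-level lifting recursion (your only slip is labelling the direction of properness you use as ``$\Leftarrow$''; in the paper's displayed equivalence it is the left-to-right implication, from an edge downstairs to an edge among preimages).
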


\begin{proof}
Let $g^\infty_n$ denote the canonical projection from $V_\G$ to $V_n$, where $(\G_n)_{n\in\N}$ is the sequence of finite connected graphs whose limit is $\G$. Assuming $U, W$ form a clopen partition of $V_\G$, there is $n$ such that $U = (g_n^\infty)^{-1}(U')$, $W = (g_n^\infty)^{-1}(W')$, where $U', W'$ form a partition of $V_{n}$.

Since $\G_n$ is connected, there are $x \in U'$, $y \in W'$ such that $\{x,y\}\in E_{n}$.
The mapping $g^\infty_n$ is a proper epimorphism, therefore there are $u,w$ such that $g^\infty_n(u)=x$, $g^\infty_n(w)=y$ and $u \sim w$. Clearly, $u \in U$ and $w\in W$.	
\end{proof}

\begin{theorem}
Assume $\G$ is the inverse limit of a sequence $(\G_n)_{n\in\N}$ of finite graphs and the graph relation $\sim$ on $\G$ is transitive.
Then the topological quotient $\G/_\sim$ is connected if and only if all graphs $\G_n$ are connected.
\end{theorem}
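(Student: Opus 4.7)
The plan is to prove both implications by contradiction, using the clopen/saturation correspondence between the quotient $\G/_\sim$ and the subsets of $V_\G$ that are unions of $\sim$-equivalence classes. Note that transitivity of $\sim$, combined with reflexivity and symmetry inherited from the graph structure, makes $\sim$ an equivalence relation, so $\G/_\sim$ is a genuine topological quotient and a clopen partition of $\G/_\sim$ corresponds bijectively to a clopen partition $V_\G = U \cup W$ in which both $U$ and $W$ are $\sim$-saturated (i.e.\ $u \in U$ and $u \sim v$ force $v \in U$, and symmetrically for $W$).

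For the direction $(\Leftarrow)$, assume all $\G_n$ are connected and suppose toward contradiction that $\G/_\sim$ is disconnected. Pulling back a nontrivial clopen partition of $\G/_\sim$ yields a nontrivial clopen partition $V_\G = U \cup W$ with $U,W$ both $\sim$-saturated. I would then apply Lemma \ref{LMDwanbueo} to this partition to obtain $u \in U$ and $w \in W$ with $u \sim w$, which contradicts the saturation of $U$ and $W$. This direction requires no appeal to transitivity beyond the fact that we have a well-defined quotient.

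For the direction $(\Rightarrow)$, assume $\G/_\sim$ is connected and suppose toward contradiction that some $\G_n = (V_n,E_n)$ is disconnected, with partition $V_n = A' \cup B'$ such that no edge of $E_n$ crosses between $A'$ and $B'$. Define $U = (g^\infty_n)^{-1}(A')$ and $W = (g^\infty_n)^{-1}(B')$. Since $g^\infty_n$ is continuous and surjective (being a proper epimorphism, as established in the earlier discussion of the inverse-limit setup), $U$ and $W$ form a nontrivial clopen partition of $V_\G$. The key point is that $U$ and $W$ are $\sim$-saturated: indeed, if $u \in U$ and $u \sim v$, then by definition of the graph relation on the inverse limit we have $\{u_n, v_n\} \in E_n$; since $u_n \in A'$ and there are no $A'$--$B'$ edges, $v_n \in A'$, so $v \in U$ (and analogously for $W$). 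Thus $U, W$ descend to a nontrivial clopen partition of $\G/_\sim$, contradicting its connectedness.

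The main subtlety, rather than an obstacle, is verifying that saturated clopen partitions of $V_\G$ correspond to clopen partitions of $\G/_\sim$; this is automatic once $\sim$ is known to be an equivalence relation, which is exactly why the transitivity hypothesis is invoked. Beyond that, the argument is a clean packaging of Lemma \ref{LMDwanbueo} (for $\Leftarrow$) and the propagation of a level-$n$ disconnection to the whole inverse limit (for $\Rightarrow$).
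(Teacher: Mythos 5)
Your proposal is correct and follows essentially the same route as the paper: the ``if'' direction pulls back a clopen partition of $\G/_\sim$ to a $\sim$-saturated clopen partition of $V_\G$ and contradicts Lemma~\ref{LMDwanbueo}, while the ``only if'' direction lifts a disconnection of some $\G_n$ via $(g^\infty_n)^{-1}$ to a saturated clopen partition of the limit. Your write-up merely makes explicit the saturation/descent bookkeeping that the paper leaves implicit.
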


\begin{proof}
The ``if'' part follows from Lemma~\ref{LMDwanbueo}, because if $\G /_\sim$ was disconnected, we would have a clopen partition $U \cup W = V_\G$ such that $U$, $W$ are unions of $\sim$-equivalence classes.

The ``only if'' part is clear: Say, if $\G_k$ is disconnected, we obtain a clopen partition of the form $(g_k^\infty)^{-1}(U')\cup (g_k^\infty)^{-1}(W') = V_\G$, where $U'$, $W'$ are nonempty unions of some components of $V_{k}$. Obviously, there can be no adjacencies between pairs of elements from this partition.
\end{proof}

\begin{theorem}
Assume $\G$ is the inverse limit of finite graphs $(\G_n)_{n\in\N}$ with proper epimorphisms, such that the graph relation $\sim$ on $\G$ is transitive.
Then the topological quotient space $\G/_\sim$ is locally connected if and only if there exists $n_0$ such that for each $n \geq n_0$ the bonding mapping from $\G_{n+1}$ onto $\G_n$ has connected fibers.
\end{theorem}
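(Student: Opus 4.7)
The plan is to derive each direction from the preceding theorem by applying it to suitable ``fiber'' inverse sequences.

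First, I would set up the following key observation. For each $n \in \N$ and each $s \in V_n$, the sequence of finite graphs $\bigl((g_m^n)^{-1}(s)\bigr)_{m \geq n}$, equipped with the bondings obtained by restricting $g_{m+1}^m$, is an inverse sequence of finite graphs with proper epimorphisms (properness of each restriction follows from the composition lemma stated just before Lemma~\ref{LMniceonessxxs}). Its inverse limit is $(g_n^\infty)^{-1}(s)$, and hence the preceding theorem gives: the set $X_n(s) := \pi\bigl((g_n^\infty)^{-1}(s)\bigr)$ is connected in the quotient $\G/_\sim$ if and only if all the finite graphs $(g_m^n)^{-1}(s)$ for $m \geq n$ are connected.

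For the ``if'' implication, suppose $g_{n+1}^n$ has connected fibers for every $n \geq n_0$. Iterating Lemma~\ref{LMniceonessxxs}, each $(g_m^n)^{-1}(s)$ for $m > n \geq n_0$ is connected, so every $X_n(s)$ with $n \geq n_0$ is a connected subset of $\G/_\sim$. Moreover, the diameters of $(g_n^\infty)^{-1}(s)$ in $V_\G$ tend to zero as $n \to \infty$ uniformly in $s$, and by uniform continuity of $\pi$ on the compact space $V_\G$ the same holds for the diameters of $X_n(s)$. Given $[p] \in \G/_\sim$ and an open neighborhood $W$, use compactness of $[p]$ inside $\pi^{-1}(W)$ to find $n \geq n_0$ and a set $T \subseteq V_n$ containing $p_n$ such that $(g_n^\infty)^{-1}(T) \subseteq \pi^{-1}(W)$. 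By transitivity of $\sim$, we may enlarge $T$ to a union of connected components of $\G_n$, making $(g_n^\infty)^{-1}(T)$ $\sim$-saturated; for $n$ large enough this enlargement stays inside $\pi^{-1}(W)$. Then $\pi\bigl((g_n^\infty)^{-1}(T)\bigr) = \bigcup_{t \in T} X_n(t)$ is an open neighborhood of $[p]$, and since the $X_n(t)$ are connected and the adjacency structure on $T$ provides nonempty pairwise intersections via the proper-epimorphism property, this union is itself connected.

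For the ``only if'' implication, I would argue by contraposition. Suppose the bondings $g_{n+1}^n$ have disconnected fibers for infinitely many $n$. Using properness of the bondings together with compactness of $V_\G$, one constructs a point $p \in V_\G$ and an infinite sequence $(n_k)$ such that $(g_{n_k+1}^{n_k})^{-1}(p_{n_k})$ is disconnected for every $k$ (lift representative disconnected fibers to threads in $V_\G$ and pass to a convergent subsequence). The key observation yields that $X_{n_k}(p_{n_k})$ is a disconnected subspace of $\G/_\sim$ for every $k$. Since $\mathrm{diam}\bigl(X_{n_k}(p_{n_k})\bigr) \to 0$, these cells shrink down to $[p]$. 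I would then derive failure of local connectedness at $[p]$ by showing that any hypothetical connected open neighborhood of $[p]$ would, for $k$ large, have to respect the disconnection of $X_{n_k}(p_{n_k})$ while still containing $[p]$ on only one side, contradicting its connectedness once the ``escape routes'' through neighboring cells are ruled out.

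I expect the main technical difficulty to be in the last step of the ``only if'' direction: rigorously extracting the failure of local connectedness from the persistent disconnection of the shrinking cells $X_{n_k}(p_{n_k})$. The subtle point is that a priori a connected neighborhood of $[p]$ could try to ``escape'' the disconnection by extending into adjacent cells $X_{n_k}(s)$ with $s$ adjacent to $p_{n_k}$ in $\G_{n_k}$, and ruling this out requires combining the transitivity of $\sim$ with the combinatorial structure of the bonding maps. A secondary obstacle is the König-style construction of the witness point $p$, since the ``bad'' vertices at different levels do not automatically project to one another.
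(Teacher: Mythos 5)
Your ``if'' direction follows the paper's route (iterate Lemma~\ref{LMniceonessxxs} to get connectedness of the finite fibers $(g^m_n)^{-1}(s)$, then pass to the cells $X_n(s)$), and your identification of $X_n(s)$ with the quotient of the fiber sub-inverse-sequence is a correct and useful elaboration of what the paper leaves implicit. However, the step that actually produces a small connected neighborhood is broken: a connected component of $\G_n$ containing $p_n$ may be all of $V_n$ (this already happens for the Lipscomb graphs $\G_n=(A^n,E_n)$ when $\G$ is connected, since each $\G_n$ is then connected), so after enlarging $T$ to a union of components the set $(g_n^\infty)^{-1}(T)$ becomes the whole space, and the claim that ``for $n$ large enough this enlargement stays inside $\pi^{-1}(W)$'' is false. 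The repair is to not enlarge at all: take $T=g_n^\infty\bigl(\pi^{-1}([p])\bigr)$; since $x\sim y$ forces $\{x_n,y_n\}\in E_n$, any two elements of $T$ are adjacent, so $\bigcup_{t\in T}X_n(t)$ is connected by your intersection argument, and since $\sim$ is a closed relation on a compact space, $\pi$ is a closed map, so $\pi\bigl((g_n^\infty)^{-1}(T)\bigr)$ is automatically a (not necessarily open) neighborhood of $[p]$ inside $W$; weak local connectedness at every point suffices for local connectedness.

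The genuine gap is in the ``only if'' direction, whose decisive step you state as an expectation rather than prove. Two things are missing. First, the extraction of the witness branch: a convergent subsequence of threads passing through bad vertices $v_n$ need not itself pass through infinitely many of the $v_n$, and disconnectedness of $(g^{m+1}_m)^{-1}(v)$ does not automatically yield disconnectedness of $(g^{m+1}_n)^{-1}(g^m_n(v))$ for $n<m$, so the K\"onig-type argument requires more than ``pass to a convergent subsequence.'' Second, and more seriously, deducing failure of local connectedness at $[p]$ from the shrinking disconnected cells $X_{n_k}(p_{n_k})$ --- i.e., ruling out connected neighborhoods that escape through adjacent cells --- is the entire content of this implication, and you explicitly defer it as ``the main technical difficulty.'' (For what it is worth, the paper's own proof of this implication is itself only a two-sentence sketch, so you are in good company; but as written your proposal does not establish the ``only if'' direction.)
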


\begin{proof}
Suppose all the bonding mappings above some $n_0$ have connected fibers. Removing an initial part of the sequence, we may assume $n_0 = 0$.
Fix $k$ and fix $v \in V_k$. The basic clopen set
$$U_v := \{x \in V_\G\colon g^\infty_k(x)=v\}$$
is connected, by Lemma~\ref{LMniceonessxxs}.
Hence, $\G$ is locally connected.

Now suppose for every $n$ there is $k_n>n$ such that the bonding mapping $\map{g^{k_n}_n}{V_{k_n}}{V_n}$ has a disconnected fiber.
Then we can construct a branch through the tree induced by our inverse sequence, so that infinitely many neighborhoods of it are disconnected, therefore $\G$ is not locally connected at this branch. 
\end{proof}

\begin{corollary}\label{col_tran}
Assume $\G$ is the inverse limit of finite graphs $(\G_n)_{n\in\N}$ with proper epimorphisms, such that $|V_{1}|=1$ and for each $n$ the bonding mapping from $\G_{n+1}$ onto $\G_n$ has connected fibers. If the graph relation $\sim$ on $\G$ is transitive, then the quotient topological space $\G/_ \sim$ is connected and locally connected.
\end{corollary}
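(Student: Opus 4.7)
The plan is to deduce this corollary directly from the two preceding theorems by verifying their hypotheses in turn. Both theorems are characterizations under the assumption that $\sim$ is transitive, which is given; so the task reduces to checking that (a) every $\G_n$ is connected, and (b) every bonding map $g^{n+1}_n$ has connected fibers. Condition (b) is already part of the hypothesis, so the only substantive step is (a).

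For (a), I would argue by induction on $n$. The base case $n=1$ is immediate: since $|V_{1}|=1$, the graph $\G_1$ consists of a single vertex with its obligatory loop, hence is trivially connected. For the inductive step, suppose $\G_n$ is connected. The bonding map $\map{g^{n+1}_n}{V_{n+1}}{V_n}$ is a proper epimorphism (as part of the inverse system) and, by assumption, has connected fibers. Lemma~\ref{LMniceonessxxs} then applies with $f = g^{n+1}_n$ and $\H = \G_n$ to yield that $\G_{n+1}$ is connected. Hence every $\G_n$ is connected, and the first of the two preceding theorems gives that $\G/_\sim$ is connected.

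For local connectedness, the hypothesis that every bonding map has connected fibers is exactly the criterion of the second preceding theorem, with $n_0 = 1$ (or indeed with any choice of $n_0 \in \N$). That theorem then delivers local connectedness of $\G/_\sim$, completing the argument. I do not foresee any real obstacle: the corollary is essentially a packaging of the two characterization theorems, with the only new ingredient being the inductive verification of (a), which is itself a direct application of Lemma~\ref{LMniceonessxxs}.
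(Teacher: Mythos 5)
Your proof is correct and is exactly the intended derivation: the paper states this corollary without a separate proof, treating it as an immediate consequence of the two preceding theorems once one observes (as you do, by induction from $|V_1|=1$ via Lemma~\ref{LMniceonessxxs}) that every $\G_n$ is connected. No gaps.
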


Now it is time to apply this corollary to the graph Lipscomb's space $J^\G_A = N(A)/_\Gsim$ for a finite graph $\G$.

\begin{corollary}
	For a finite graph $\G=(A,E)$ the following are equivalent: 
	\begin{enumerate}[i)]
		\item $\G$ is a connected graph;
		\item the graph Lipscomb's space $J^\G_A$ (and the space $\w^A_\G$) is connected and locally connected;
		\item the graph Lipscomb's space $J^\G_A$ (and the space $\w^A_\G$) is arcwise connected.	
	\end{enumerate}
\end{corollary}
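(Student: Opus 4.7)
The plan is to derive (i)$\Rightarrow$(ii) from Corollary~\ref{col_tran} by realising $N(A)$ as an inverse limit of finite graphs, to pass from (ii) to (iii) via a classical continuum-theoretic result, and to close the cycle using Theorem~\ref{main}.

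For (i)$\Rightarrow$(ii) I would set $\H_1$ to be the trivial one-vertex graph (so that $|V_{\H_1}|=1$, as required by Corollary~\ref{col_tran}) and, for $n\geq 2$, let $V_{\H_n}=A^{n-1}$, equipped with loops together with the non-loop edges of the form $\{sab^m,\,sba^m\}$, where $|s|+1+m=n-1$ and $\{a,b\}\in E$; the bonding mapping $g^{n+1}_n\colon A^n\to A^{n-1}$ is plain truncation. I would then verify, in order: (a) each $g^{n+1}_n$ is a proper epimorphism, since an edge $\{sab^m,\,sba^m\}$ in $\H_n$ lifts to $\{sab^{m+1},\,sba^{m+1}\}$ in $\H_{n+1}$, and loops lift to loops; (b) the fibre of $g^{n+1}_n$ over $u'\in A^{n-1}$ is the copy $\{u'a\colon a\in A\}$ of $\G$ sitting inside $\H_{n+1}$, hence connected by hypothesis (i); (c) the natural graph relation on $\varprojlim \H_n\cong N(A)$ coincides with $\Gsim$, because two distinct branches are compatible at every finite level precisely when they share some prefix $s$ and continue as $a\vec b$ and $b\vec a$ with $\{a,b\}\in E$; and (d) $\Gsim$ is transitive, which is immediate from the paper's observation that every $\Gsim$-class has at most two elements. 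Corollary~\ref{col_tran} then yields that $J^\G_A=N(A)/_\Gsim$ is connected and locally connected, and Theorem~\ref{s_hom} transfers these properties to $\w^A_\G$.

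For (ii)$\Rightarrow$(iii) I would invoke that $N(A)$ is a Cantor space (since $A$ is finite), so $J^\G_A$ is a compact metric space (also visibly so via Theorem~\ref{s_hom}, as a subset of $\ell^2(A')$). A compact, connected, locally connected metric space is a Peano continuum, and by the Hahn--Mazurkiewicz theorem every Peano continuum is arcwise connected. The remaining implication (iii)$\Rightarrow$(i) is immediate: arcwise connectedness gives connectedness, and then Theorem~\ref{main} forces $\G$ to be connected.

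The main obstacle I anticipate is the bookkeeping in step (a) together with the identification in step (c): one has to describe precisely the edge set on $A^{n-1}$ so that truncation is genuinely a proper epimorphism (not merely a surjective homomorphism) and so that the induced inverse-limit adjacency matches $\Gsim$ exactly, with no spurious or missing pairs. Once this combinatorial setup is in place, Corollary~\ref{col_tran} and the Hahn--Mazurkiewicz theorem close the argument.
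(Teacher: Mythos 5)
Your proposal is correct and follows essentially the same route as the paper: realizing $N(A)$ as an inverse limit of finite graphs on the sets $A^n$ with proper epimorphic truncation maps whose fibers are copies of $\G$, applying Corollary~\ref{col_tran} to get connectedness and local connectedness, passing to arcwise connectedness via the locally connected continuum fact, and closing the cycle with Theorem~\ref{main}. Your explicit description of the edge sets (pairs $\{sab^m, sba^m\}$ with $\{a,b\}\in E$) and the verification that the induced limit adjacency is exactly $\Gsim$ merely supply details that the paper leaves implicit.
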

\begin{proof}
	The spaces $J^\G_A$ and $\w^A_\G$ are homeomorphic by Theorem \ref{s_hom} so they are simultaneously connected, locally connected and arcwise connected.\\
	For the implication $i)\Rightarrow ii)$ note that $N(A)$ is the inverse limit of a sequence of finite graphs $\G_n=(A^n,E_n)$ for $n\in\N\cup\{0\}$, whose bonding mappings are proper epimorphisms, $A^0= \{\emptyset\}$, and for each $n$ the bonding mapping from $A^{n+1}$ to $A^n$ has fibers isomorphic to $\G=(A,E)$. The relation $\Gsim$ is transitive (as an equivalence relation) so by Corollary \ref{col_tran} we obtain that $J^\G_A = N(A)/_\Gsim$ is connected and locally connected.\\
	The implication $ii)\Rightarrow iii)$ follows from the fact that $J^\G_A$ is compact (as $A$ is finite) and metrizable, and every locally connected metrizable continuum is pathwise and arcwise connected (see \cite[6.3.11]{Eng}).\\
	The implication $iii)\Rightarrow i)$ follows from Theorem \ref{main}.
\end{proof}

It seems natural to generalize the construction from this section, at least by varying the graph structure at each moment of time. Of course, one needs to think of a natural embedding of the quotient space into a Euclidean space, then analyzing whether it is an IFS-attractor.

Making things too general will lead to pathological examples, as every nonempty compact metrizable space is a quotient of the Cantor set and every $1$-dimensional compact metrizable space (e.g. the pseudoarc, the shark teeth, the Warsaw circle, etc.) is a 2-1 image of the Cantor set.

\end{document}